\theoremstyle{plain}
\newtheorem*{theorem*}{Theorem}
\newtheorem{theorem}{Theorem}[section]
\newtheorem{proposition}[theorem]{Proposition}
\newtheorem*{proposition*}{Proposition}
\newtheorem*{corollary*}{Corollary}
\newtheorem{lemma}[theorem]{Lemma}
\newtheorem{definition}[theorem]{Definition}
\newtheorem*{lemma*}{Lemma}
\newtheorem*{definition*}{Definition}
\newtheorem{remark}[theorem]{Remark}
\newtheorem*{remark*}{Remark}
\newtheorem*{conjecture*}{Conjecture}
\theoremstyle{definition}
\newcommand{\C}{\mathbb{C}}
\newcommand{\N}{\mathbb{N}}
\newcommand{\R}{\mathbb{R}}
\renewcommand{\Re}{\operatorname{Re}}
\title[Interpolation by  entire functions with growth conditions]
{Interpolation by entire functions with growth conditions}
\author[ M. Ouna\"{\i}es]
{ Myriam Ounaies}
\address{Institut de Recherche Math\'ematique Avanc\'ee, Universit\'e 
Louis Pasteur 7 Rue Ren\'e Des\-car\-tes, 67084 Strasbourg CEDEX, France.}
\email{ounaies@math.u-strasbg.fr}
\date{\today}
\keywords{discrete interpolating varieties, entire functions}
\subjclass{30E05, 42A85}
\begin{document}

\maketitle
\section*{Introduction}

Let $p : \C \rightarrow [0,+\infty[$ be a weight (see Definition \ref{defweight}) and $A_p(\C)$ the vector space of all entire functions satisfying $\sup_{z\in \C} \vert f(z)\vert \le \exp(-B p(z))<\infty$ for some constant $B>0$.  For instance, if $p(z)=\vert z\vert$, $A_p(\C)$ is the space of all entire functions of exponential type.

Following \cite{Be-Ta}, the interpolation problem  we are considering is  :  let $V=\{(z_j,m_j)\}_j$ be a multiplicity variety, that is, $\{z_j\}_j$ is a sequence of complex numbers diverging to $\infty$,  $\vert z_j\vert \le \vert z_{j+1}\vert $ and $\{m_j\}_j$ is a sequence of strictly positive integers.  Let  $\{w_{j,l}\}_{j,0\le l<m_j}$ be a  doubly indexed sequence of complex numbers. 

Under what conditions does there exist an entire function $f\in A_p(\C)$ such that
 $$\frac{f^{(l)}(z_j)}{l!}=w_{j,l}, \ \ \forall j,\ \ \forall 0\le l<m_j ?$$ 

In other words, if we denote by $\rho$  the restriction operator defined on $A_p(\C)$ by 
\[ 
\rho(f)=\{\frac{f^{l}(z_j)}{l!}\}_{j,0\le l< m_j},
\]
what is the image of $A_p(\C)$ by $\rho$ ?

  We say that $V$ is an "interpolating variety" when  $\rho(A_p(\C))$ is the space of all  doubly indexed sequence $W=\{w_{j,l}\}$ satisfying the  growth condition 
 $$\vert w_{j,l}\vert \le A \exp(Bp(z_j))\ \ \forall j, \  \ \forall 0\le l<m_j,$$
  for certain constants $A,B>0$.  
  
Let us mention the important following result :

\begin{theorem}\label{intro}\cite[Corollary 4.8]{Be-Li}  

$V$ is an interpolating variety for $A_p(\C)$ if and only if the following conditions hold :
\begin{itemize}
\item[(i)] $\ \forall R>0,\ \ N(0,R)\le A p(R)+B$
\item[(ii)] $\forall j\in \N,\ \ \ N(z_j, \vert z_j\vert) \le A\ p(z_j)+B$,
\end{itemize}
for some constants $ A, B>0$.  
\end{theorem}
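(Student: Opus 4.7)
The plan is to prove the two implications separately, sufficiency being the substantial half.

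For necessity, I would exploit the uniform constants $A,B$ guaranteed by the open mapping theorem applied to the surjection $\rho$, together with Jensen's formula. To obtain (ii), fix an index $k$ and interpolate the admissible data $w_{j,l}=\delta_{jk}\delta_{l,0}\,e^{Bp(z_k)}$: the resulting $f_k\in A_p(\C)$ satisfies $f_k(z_k)=e^{Bp(z_k)}$, vanishes on $V\setminus\{(z_k,m_k)\}$ with the prescribed multiplicities, and obeys $|f_k(z)|\le A'e^{B'p(z)}$. Jensen's formula on $D(z_k,|z_k|)$ then bounds $N_{V\setminus\{z_k\}}(z_k,|z_k|)$ by the boundary average of $\log|f_k|$ minus $\log|f_k(z_k)|$, which the doubling property of weights controls by $Cp(z_k)+D$; adding back the contribution of $z_k$ itself yields (ii). For (i) I would run the analogous argument centered at $0$ on a function in $A_p(\C)$ vanishing on all of $V$, produced by interpolating zero data while prescribing a nonzero value somewhere far from $V$.

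For sufficiency I would implement the Hörmander $\bar\partial$ scheme. First, build a Weierstrass canonical product $F\in A_p(\C)$ with divisor exactly $V$; condition (i) is precisely what is needed to place $F$ in $A_p(\C)$. Second, and this is the crux, establish a quantitative minimum-modulus estimate
\[
\left|\frac{F(z)}{(z-z_j)^{m_j}}\right|\ge e^{-Cp(z_j)-D}\qquad\text{for}\ z\in D(z_j,\epsilon_j),
\]
with $\epsilon_j$ a small negative power of $1+|z_j|$. Third, given admissible data $W$, form the Taylor polynomials $P_j(z)=\sum_{l<m_j}w_{j,l}(z-z_j)^l$, paste them via smooth cutoffs $\chi_j$ supported in $D(z_j,\epsilon_j)$ with $|\bar\partial\chi_j|\le C/\epsilon_j$, and obtain a $C^\infty$ interpolant $f_0$ whose $\bar\partial$ is supported in thin annuli. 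Fourth, solve $\bar\partial u=\bar\partial f_0/F$ by Hörmander's theorem in the plurisubharmonic weight $2(B'+1)p+2\log(1+|z|^2)$; finiteness of the $L^2$ norm of the right-hand side follows from the lower bound on $F$ together with the bounds on $\chi_j$ and $P_j$. Setting $f=f_0-Fu$ then produces an entire function (since $\bar\partial f=0$), lying in $A_p(\C)$ by the sub-mean-value inequality together with the growth of $F$, and matching the prescribed Taylor coefficients because $F$ vanishes to order $m_j$ at each $z_j$.

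The principal technical obstacle is the minimum-modulus estimate of step two: condition (ii) must be converted into a uniform off-diagonal lower bound for the canonical product. I would handle this by a Cartan-type excision—outside a union of exceptional disks of total radius much smaller than $\epsilon_j$, the product $\prod_{k\ne j}E(z/z_k,q_k)$ admits a classical Boutroux--Cartan lower bound—and then use (ii) to show that the exceptional disks can be arranged to leave a full disk of radius $\epsilon_j$ around each $z_j$ uncovered, while the sum of the remaining $\log|z-z_k|$-type contributions from neighbors of $z_j$ is bounded by $Cp(z_j)+D$. The rest of the proof is bookkeeping that $p$ absorbs the various constants.
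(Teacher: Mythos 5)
This theorem is quoted by the paper from Berenstein--Li \cite{Be-Li}; no proof of necessity appears here, and sufficiency is recovered only indirectly, as a corollary of Proposition~\ref{intvar} together with Theorem~\ref{main}. So your outline is inevitably taking a different route from what the paper itself does — it is essentially the original Berenstein--Taylor/Berenstein--Li scheme — and it is worth comparing the two and flagging where your sketch needs repair.

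For necessity, your plan for (ii) (uniform right inverse via the open mapping theorem, then Jensen on $D(z_k,|z_k|)$) is the standard one and fine in spirit. Your plan for (i), however, is wrong as written: ``interpolating zero data'' is satisfied by the zero function, and $\rho$ only prescribes values on $V$, so you cannot ``prescribe a nonzero value somewhere far from $V$'' within the given interpolation problem. The repair is cheap and is exactly what the paper does in its Theorem~\ref{unicity} (proof of (iv)$\Rightarrow$(i)): take the $f_1\in A_p(\C)$ you already built for (ii) at $j=1$, multiply by $(z-z_1)^{m_1}$ (still in $A_p(\C)$ by property (a) of the weight), obtain a nonzero function vanishing on all of $V$, and apply Jensen at the origin.

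For sufficiency, the paper never constructs a canonical product and never needs a minimum-modulus estimate: it builds the smooth interpolant from the Newton polynomials $P_{q_n}$ of the divided-difference sequence, glued by a \emph{radial} partition of unity subordinate to the dyadic annuli (Lemma~\ref{F}), and manufactures the subharmonic weight $U$ directly from the integrated counting function of $V$ (Lemma~\ref{U}), so that only $N(0,R)\lesssim p(R)$ is used in the $\bar\partial$ step; condition (ii) enters only to show $\tilde A_p(V)=A_p(V)$. The divided differences absorb the clustering of zeros for free. Your scheme instead hinges on the lower bound for $F(z)/(z-z_j)^{m_j}$ on $D(z_j,\epsilon_j)$, and two points in your sketch are off. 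First, you cannot in general take $\epsilon_j$ to be a negative power of $1+|z_j|$: condition (ii) permits a distinct zero $z_k$ at distance as small as $|z_j|e^{-Ap(z_j)-B}$ from $z_j$, and $D(z_j,\epsilon_j)$ must avoid it (both for the lower bound and for disjointness of the cutoff supports), so $\epsilon_j$ may have to shrink exponentially in $p(z_j)$. Second, once $\epsilon_j$ is that small, the quantities $\epsilon_j^{-1}$ (from $|\bar\partial\chi_j|$) and $\epsilon_j^{-m_j}$ (from the lower bound on $|F|$ on the support of $\bar\partial\chi_j$) are no longer obviously absorbed by a weight linear in $p$; one needs to invoke (ii) \emph{at the nearest neighbor} $z_k$ to bound $m_j\log(1/\epsilon_j)$ by $Cp(z_j)$. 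That observation closes the gap, but without it the $L^2$ norm of $\bar\partial f_0/F$ need not be finite. This is precisely why Berenstein--Taylor use groupings along level sets of $|g|$ rather than disjoint disks about individual $z_j$'s, and why the present paper sidesteps the minimum-modulus machinery altogether in favor of divided differences.
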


Here, $N(z,r)$ denotes the integrated counting function of $V$ in the disc of center $z$ and radius $r$ (see Definition \ref{count} below).

In \cite{Be-Ta}, Berenstein and Taylor describe  the space $\rho(A_p(\C))$ in the case where there exists a function $g\in A_p(\C)$ such that $V= g^{-1}(0)$. They used groupings of the points of $V$ with respect to the connex components of the set $\{\vert g(z)\vert \le \varepsilon \exp(-Bp(z))\}$, for some $\varepsilon, B>0$ and the divided diffrences with respect to this grouping.

 The main aim of this paper is to determine more explicitely the space $\rho(A_p(\C))$ in the more general case  where condition (i) is satisfied. It is clear that it is the case when $V$ is not a uniqueness set for $A_p(\C)$, that is, when there exists  $f\in A_p(\C)$ not identically equal to zero such that  $V\subset f^{(-1)}(0)$.  
 
 We refer to \cite{Bo-La}  and \cite{Le} for similar results in the case where $p(z)=\vert z\vert^{\alpha}$.  
 
  As in \cite{Be-Ta} and \cite{Bo-La},  the divided differences will be important tools. Our condition will involve the divided differences with respect to the intersections of $V$ with discs centered at the origin. To be more precise,  the main theorem, stated  in the case where all the multiplicities are equal to one, for the sake of simplicity, is the following :

\begin{theorem}
Assume that $V$ verifies condition (i). Then $W=\{w_j\}_j \in \rho(A_p(\C))$ if and only if for all $R>0$,
\[
\vert  \sum_{\vert z_k\vert<R} w_k \prod_{\vert z_m\vert<R, m\not=k} R/(z_k-z_m)    \vert \le A \exp{B p(R)},
\]

\end{theorem}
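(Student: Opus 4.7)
The expression on the left of the inequality equals $R^{n(R)-1}$ times the Newton divided difference $[z_1,\ldots,z_{n(R)}]w$ of the data $W$ at the points of $V\cap\{|z|<R\}$, where $n(R)$ is their number. Recognizing this bridges the hypothesis to the classical integral representation of divided differences and already disposes of the necessity direction: if $f\in A_p(\C)$ represents $W$, the Cauchy representation
\[
[z_1,\ldots,z_{n(R)}]f=\frac{1}{2\pi i}\int_{|\zeta|=2R}\frac{f(\zeta)\,d\zeta}{\prod_{|z_k|<R}(\zeta-z_k)},
\]
combined with the estimate $|\zeta-z_k|\geq R$ on the contour and $|f(\zeta)|\leq C\exp(Bp(\zeta))$, gives a bound of order $\exp(Bp(2R))/R^{n(R)-1}$ for the left-hand side; the standard doubling property $p(2R)\leq C'p(R)+D'$ of the weight absorbs $p(2R)$ into $p(R)$ and yields the conclusion.

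\emph{Sufficiency, plan.} I would use a classical $\bar\partial$-scheme. Choose dyadic radii $R_n=2^n$, set $V_n=V\cap\{|z|<R_n\}$, and let $P_n$ be the Lagrange polynomial interpolating $W$ on $V_n$. With smooth cut-offs $\chi_n$ equal to $1$ on $|z|\leq R_n$, vanishing on $|z|\geq R_{n+1}$, and satisfying $|\bar\partial\chi_n|\leq C/R_n$, form the smooth pre-interpolant
\[
\phi=P_0+\sum_{n\geq 0}\chi_n(P_{n+1}-P_n),
\]
which agrees with $w_k$ at each $z_k$. Condition (i) permits constructing, via a Weierstrass product whose genus it controls, an auxiliary $g\in A_p(\C)$ vanishing on $V$. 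Solving $\bar\partial v=\bar\partial\phi/g$ by Hörmander's weighted $L^2$-theorem (with weight proportional to $p$) and setting $f=\phi-gv$ then produces a holomorphic function in $A_p(\C)$ with $f(z_k)=\phi(z_k)-g(z_k)v(z_k)=w_k$.

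\emph{Main obstacle.} The substantive task is controlling $|\bar\partial\phi|$ on each annulus $R_n<|z|<R_{n+1}$. Ordering $V$ by modulus, letting $c_k=[z_1,\ldots,z_k]w$, and writing the Newton expansion
\[
P_{n+1}(z)-P_n(z)=\sum_{z_k\in V_{n+1}\setminus V_n}c_k\prod_{j<k}(z-z_j),
\]
the hypothesis applied at $R$ just above $|z_k|$ yields $|c_k|\leq A|z_k|^{-(k-1)}\exp(Bp(|z_k|))$. The trivial bound $|z-z_j|\leq 2R_{n+1}$ then delivers $|P_{n+1}-P_n|\leq A\,4^{n(R_{n+1})}\exp(Bp(R_{n+1}))$ on the annulus, and condition (i) converts the combinatorial factor $4^{n(R)}$ into $\exp(C'p(R))$. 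The remaining difficulty, which is the real heart of the sufficiency argument, is securing a pointwise lower bound on $|g|$ on the supports of the $\bar\partial\chi_n$ so that $\bar\partial\phi/g$ meets Hörmander's hypothesis with the desired weight; this is where condition (i) must be used in its sharpest form, through minimum-modulus estimates for the Weierstrass product $g$ and a careful geometric arrangement of the cut-off annuli away from the ``exceptional'' zeros of $g$.
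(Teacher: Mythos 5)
Your necessity argument is correct and is essentially the paper's: the paper uses \cite[Lemma 6.2.9]{Be-Ga}, which bounds $|\Delta^{k-1}f(x_1,\dots,x_k)|$ by $(2/\delta)^{k-1}\sup_\Omega|f|$, while you use the Cauchy integral formula for divided differences over the circle $|\zeta|=2R$; both give the same estimate once the doubling property is invoked. Your observation that the quantity in the statement equals $R^{n(R)-1}$ times the divided difference $[z_1,\dots,z_{n(R)}]w$ is also the correct dictionary to the paper's space $\tilde A_p(V)$, and your bound for $c_k$ and for $|P_{n+1}-P_n|$ on the dyadic annuli matches Lemma \ref{induction}.

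For sufficiency, you take a genuinely different route from the paper (the classical ``divide by an auxiliary function'' scheme), and there are two real gaps. First, you assume the existence of $g\in A_p(\C)$ with $V\subset g^{-1}(0)$ under condition (\ref{N(0,R)}), constructed by a Weierstrass product. In the paper this existence is \emph{not} available a priori; it is deduced in Theorem \ref{unicity} as a \emph{consequence} of the main theorem, precisely because the paper does not want to rely on a Weierstrass-product construction for a general radial doubling weight. You would need to supply an independent proof, and that is nontrivial beyond the power weights $p(z)=|z|^\alpha$.

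Second, and more fundamentally, the step you flag as the ``real heart'' -- a lower bound on $|g|$ on the supports of the $\bar\partial\chi_n$ -- is not merely technical: under condition (\ref{N(0,R)}) alone there is no control on the \emph{distribution} of the zeros, only on $N(0,R)$, so the zeros of $g$ can cluster densely inside a given dyadic annulus and destroy any usable minimum modulus estimate there. The paper avoids this entirely by not dividing by a holomorphic function at all. Instead of $\log|g|^2$, it builds a subharmonic weight $U=V+\alpha W$, with $V=\sum_n\rho_{n-2}V_n$ a smoothly glued version of the truncated potentials $V_n(z)=\sum_{0<|z_j|\le 2^n}m_j\log(|z-z_j|^2/|z_j|^2)$ and $W$ a radial corrector ensuring subharmonicity. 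The key cancellation (Lemma \ref{F}(iii) against the estimate in the proof of Lemma \ref{U}(iii)) is that $\bar\partial F$ carries exactly the factor $\prod_{k\le q_n}|z-z_k|^{m_k}$ that $e^{-V/2}$ carries with the opposite sign, so $|\bar\partial F|\,e^{-V/2}$ is bounded pointwise without any minimum-modulus input. Your scheme has no analogous cancellation: $P_{n+1}-P_n$ vanishes only on $V_n$, but $g$ has additional zeros near the cut-off annulus that are not matched, and bounding $1/g$ there is exactly the obstruction your proposal leaves open.
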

where $A,B>0$ are positive constants only depending on $V$ and $W$.

  We will denote by $\tilde A_p(V)$ the space of sequences $W=\{w_j\}_j$ satisfying the above condition. We will show that in general $\rho(A_p(\C))\subset \tilde A_p(V)$, thus, we can consider $\rho : A_p(\C)\rightarrow \tilde A_p(V)$.
In this context,  the theorem states that condition (i) implies the surjectivity of $\rho$.
 
On the other hand,  we will prove that condition (i) is actually equivalent to saying that $V$ is not a uniqueness set or, in other words, it is equivalent to the non-injectivity of $\rho$.

  As a corollary of the main theorem, we will  find the sufficency in the geometric characterization of interpolating varieties given  in Theorem \ref{intro}.

The difficult part  of the proof of the main theorem is the sufficiency. As in \cite{Be-Or, Ha-Ma, Ma-Or-Ou}, we will follow a Bombieri-H\"ormander approach based on $L^2$-estimates on the solution to the $\bar\partial$-equation. The scheme will be the following : the condition on $W$  gives a smooth interpolating function $F$ with a good growth, using a partition of the unity and Newton polynomials (see Lemma \ref{F}).
 Then we are led to solve the $\bar\partial$ equation : $\bar\partial u=-\bar\partial F$ with $L^2$-estimates, using H\"ormander theorem \cite{Ho1}. To do so, we need to construct a subharmonic function $U$ with a convenient growth and with prescribed singularities on the points $z_j$ (see Lemma \ref{U}). Following Bombieri \cite{Bo}, the fact that $e^{-U}$ is not summable near the points  $\{z_j\}$ forces  $u$ to vanish on the points $z_j$ and we are done by defining the interpolating entire function by $u+F$.

A final remark about the notations :

$A$, $B$ and $C$ will denote positive constants and 
their actual value may change from one  occurrence to the next. 

$A(t)\lesssim B(t)$ means that there exists a constant $C>0$, not depending on $t$ such that $A(t)\le C B(t)$. 
$A\simeq B$ means that $A\lesssim B\lesssim A$.

The notation $D(z,r)$ will be used for  the euclidean disk of center $z$ and radius $r$. We will denote $\displaystyle \partial f=\frac{\partial f}{\partial z}$, $\displaystyle \bar \partial f=\frac{\partial f}{\partial \bar z}$. Then  $\Delta f=4\partial\bar\partial f$ denotes the laplacian of $f$.

\section{Preliminaries and definitions.}

\begin{definition}\label{defweight}

A subharmonic function $p:\C\longrightarrow\R_+$,  is called a weight 
if, for some positive constants $C$,
\begin{itemize}
\item[(a)] $\ln(1+|z|^2)\le C p(z)$,
\item[(b)]  $p(z)=p(\vert z\vert)$,
\item[(c)] there exists a constant $C>0$  such that $p(2z)\le C p(z)$.
\end{itemize}
\end{definition}

Property (c) is referred to as the "doubling property of the weight $p$". It implies that $p(z)=O(\vert z\vert^\alpha)$ for some $\alpha>0$.

Let $A(\C)$ be the set of all entire functions, we consider the space
$$A_p(\C)=\Bigl\{f\in A(\C),\ \  \forall z\in \C,\  |f(z)| \le A\, e^{B p(z)}\hbox{ for some } A>0, B>0\Bigr\}.$$

\begin{remark}\label{rem}{}\par

\begin{itemize}
\item[(i)] Condition (a) implies that $A_p(\C)$ contains all polynomials.
\item[(ii)] Condition (c) implies that $A_p(\C)$ is stable under differentiation.
\end{itemize}
\end{remark}

Examples :
\begin{itemize}
\item $p(z)=\ln(1+\vert z\vert^2)$. Then $A_p(\C)$ is the space of all the polynomials.
\item $p(z)=\vert z\vert$. Then $A_p(\C)$ is  the space of entire functions of exponential type.
\item $p(z)=\vert z\vert^{\alpha}$, $\alpha>0$. Then $A_p(\C)$ is the space of all entire functions of order $\le \alpha$ and finite type.
\end{itemize}

Let  $V=\{(z_j,Êm_j)\}_{j\in \N}$ be a multiplicity variety.

For a function $f\in A(\C)$, we will  write  $V=f^{-1}(0)$ when $f$ vanishes exactly on the points $z_j$ with multiplicity $m_j$ and $V\subset f^{-1}(0)$ when $f$ vanishes on the points $z_j$ (but possibly elsewhere) with multiplicity at least equal to $m_j$.

We will say that $V$ is a uniqueness set  for $A_p(\C)$ if there is no function $f\in A_p(\C)$, except the zero function, such that $V\subset f^{-1}(0)$.

We need to recall the definitions of the counting functions and the integrated counting functions :

\begin{definition}\label{count}
Let $V=\{(z_j,m_j)\}_j$ be a multiplicity variety. For $z\in \C$ and $r>0$,
\[
n(z,r)=\sum\limits_{|z-z_j|\le r} m_j,
\]

\[
N(z,r)=\int_0^r\frac{n(z,t)-n(z,0)}t\, dt + n(z,0)\ln r=\sum_{0<\vert z- z_j\vert\le r}m_j \ln \frac{r}{\vert z-z_j\vert}+n(z,0)\ln r ,
\]
\end{definition}

An application of Jensen's formula in the disc $D(0,R)$ shows that, if $V$ is not a uniqueness set for $A_p(\C)$, then the following condition holds :

\begin{equation}\label{N(0,R)}
\exists A,B>0, \ \forall R>0,\ \ N(0,R)\le A p(R)+B
\end{equation}

We will lately show that the converse property holds.

By analogy with the spaces $A(\C)$ and $A_p(\C)$, we define the following spaces

$$A(V)=\{W=\{w_{j,l}\}_{j,0\le l < m_j}\subset \C\}$$
and
$$A_p(V)=\Bigl\{W=\{w_{j,l}\}_{j,0\le l < m_j}\subset \C, \ \  \forall j,\ \sum_{l=0}^{m_j-1}|w_{j,l}| \le A\, e^{B p(z_j )} \hbox{ for some } A>0, B>0\Bigr\}.$$

The space $A_p(\C)$ can be seen as the union of the Banach spaces 
$$A_{p,B}(\C)=\{f\in A(\C), \ \ \Vert f\Vert_B:=\sup_{z\in \C}\vert f(z)\vert e^{-Bp(z)} <\infty\}$$ 
and has a structure of an (LF)-space
with the topology of the inductive limit.
The analog is true about $A_p(V)$.

\begin{remark}(see \cite[Proposition 2.2.2]{Be-Ga})\par
Let $f$ be a function in $A_p(\C)$. Then, 
for some constants $A>0$ and $B>0$,  
 $$\forall z\in \C,\ \ \sum_{k=0}^{\infty} \left\vert \frac{f^{(k)}(z)}{k!}\right\vert\le A e^{Bp(z)}.$$ 
  \end{remark}

As a consequence of this remark, we see that the restriction map :
\[ 
\begin{split}
\rho: A & (\C)\longrightarrow A (V)\\
& f\quad \mapsto\; \{\frac{f^{l}(z_j)}{ l!}\}_{j,0\le l\le m_j-1}
\end{split} 
\]
maps $A_p(\C)$ into $A_p(V)$, but in general, the space $A_p(V)$ is  larger than $\rho(A_p(\C))$. It is clear that $\rho$ is injective if and only if $V$ is a uniqueness set for $A_p(\C)$.

When $\rho(A_p(\C))$=$A_p(V)$, we say that $V$ is an interpolating variety for $A_p(\C)$. 
As mentioned in the introduction, Berenstein and Li gave a geometric characterization of these varieties :

\begin{theorem}\label{doub}\cite[Corollary 4.8]{Be-Li}  

$V$ is an interpolating variety for $A_p(\C)$ if and only if conditions (\ref{N(0,R)}) and 

\begin{equation}\label{N(z,z)}
\exists A>0,\  \exists B>0\ \ \forall j\in \N,\ \ \ N(z_j,\vert z_j\vert) \le A\ p(z_j)+B
\end{equation}
hold.

\end{theorem}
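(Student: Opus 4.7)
\emph{Necessity.} Assuming $V$ is interpolating, $\rho:A_p(\C)\to A_p(V)$ is surjective. The open mapping theorem in the $(LF)$-setting will produce, for some fixed $B>0$, functions $f_j\in A_{p,B}(\C)$ with $\rho(f_j)=\delta_j$ and $\|f_j\|_B$ bounded uniformly in $j$. Applying Jensen's formula on $D(0,R)$ to $f_1$, which vanishes to the prescribed orders on $V\setminus\{z_1\}$, will yield (\ref{N(0,R)}) (the contribution of $z_1$ being absorbed into the constants). Applying Jensen on $D(z_j,|z_j|)$ to $f_j$, together with the doubling property of $p$ to bound the boundary circle mean by $p(z_j)$, will yield (\ref{N(z,z)}).

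\emph{Sufficiency.} The main theorem of the paper, applied under (\ref{N(0,R)}), gives $\rho(A_p(\C))=\tilde A_p(V)$; since $\tilde A_p(V)\subset A_p(V)$ holds automatically, sufficiency will follow once we prove the reverse inclusion $A_p(V)\subset\tilde A_p(V)$. Given $W=\{w_j\}\in A_p(V)$ with $|w_j|\le A e^{Bp(z_j)}$ and $R>0$, the triangle inequality combined with absorption of the factor $n(0,R)\lesssim p(R)$ into the exponential will reduce matters to establishing
\[ |w_k|\exp(S_k(R))\lesssim e^{B'p(R)},\qquad S_k(R):=\sum_{\substack{m\ne k\\|z_m|<R}}\log\frac{R}{|z_k-z_m|}, \]
uniformly in $|z_k|<R$. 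The factor $|w_k|\le A e^{Bp(R)}$ follows from $|z_k|<R$ and monotonicity of $p$, so the task reduces to proving $S_k(R)\lesssim p(R)$.

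To control $S_k(R)$ I would decompose the inner sum into the \emph{separated} indices $|z_k-z_m|\ge|z_m|/2$ and the \emph{close} indices $|z_k-z_m|<|z_m|/2$, the latter forcing $z_m\in D(z_k,|z_k|)$ with $|z_m|\asymp|z_k|$. The separated contribution is at most $\sum_{|z_m|<R}\log(2R/|z_m|)\lesssim N(0,R)+n(0,R)\lesssim p(R)$ via (\ref{N(0,R)}). For the close contribution, write $\log(R/|z_k-z_m|)=\log(|z_k|/|z_k-z_m|)+\log(R/|z_k|)$: (\ref{N(z,z)}) bounds the sum of the first summand over close $z_m$ by $N(z_k,|z_k|)\lesssim p(z_k)$, leaving the cross-term $n(z_k,|z_k|)\log(R/|z_k|)$ as the only remaining quantity to control.

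The main obstacle will be bounding $n(z_k,|z_k|)\log(R/|z_k|)$ by $p(R)$ and not merely by $p(R)^2$. The individual factors each admit a direct bound by $p(R)$ --- one has $n(z_k,|z_k|)\le n(0,2|z_k|)\lesssim p(|z_k|)$ from (\ref{N(0,R)}), and $\log(R/|z_k|)\le N(0,R)\lesssim p(R)$ since the Jensen sum defining $N(0,R)$ contains the term $\log(R/|z_k|)$ --- so a naive product yields only $p(R)^2$, which would be fatal because it does not absorb into $e^{Bp(R)}$. The necessary refinement is the a priori inequality $p(|z_k|)\log(R/|z_k|)\lesssim p(R)$ for $|z_k|<R$, which should follow from the convexity of $u\mapsto p(e^u)$ (i.e.\ the radial subharmonicity of $p$) combined with the doubling property: writing $\psi(u)=p(e^u)$, $u=\log|z_k|$, $U=\log R$, the function $\psi(u)(U-u)$ can be controlled by $\psi(U)$ up to a constant depending only on the doubling constant of $\psi$. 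Assembling all the bounds gives $S_k(R)\le Bp(R)+O(1)$, hence $W\in\tilde A_p(V)$, which completes the sufficiency.
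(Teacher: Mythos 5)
The theorem you are proving is cited by the paper from Berenstein--Li (Corollary 4.8), and the paper only comments that its own results (Theorem \ref{main} combined with Proposition \ref{intvar}) reprove the \emph{sufficiency} direction; necessity is taken from the reference. Your necessity sketch relies on extracting, from surjectivity of $\rho$ and the $(LF)$-open-mapping principle, a uniformly bounded family $\{f_j\}\subset A_{p,B}(\C)$ with $\rho(f_j)=\delta_j$; this is indeed the core of the Berenstein--Li argument and is not reproduced here, so I will only evaluate the sufficiency part against the paper.

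For sufficiency your architecture matches the paper's: invoke Theorem \ref{main} under (\ref{N(0,R)}) and reduce to showing $A_p(V)\subset\tilde A_p(V)$, which is exactly Proposition \ref{intvar}. However, the term-by-term bound you propose for $S_k(R)$ contains a genuine gap at the cross-term. You claim that $p(|z_k|)\log(R/|z_k|)\lesssim p(R)$ for $|z_k|<R$ follows from the convexity of $\psi(u)=p(e^u)$ together with the doubling property. This is false. Doubling gives only an \emph{upper} bound on the growth of $\psi$, whereas the claimed inequality requires an exponential \emph{lower} bound (as one has for $p(z)=|z|^\alpha$). A concrete counterexample admitted by Definition \ref{defweight}: take $p(z)=\log^2(2+|z|)$, so $\psi(u)\sim u^2$ for large $u$; at $u=U/2$ one gets $\psi(u)(U-u)\sim U^3/8$ while $\psi(U)\sim U^2$, so the ratio is unbounded. (Even $p(z)=\ln(1+|z|^2)$, for which $\psi(u)\sim 2u$, violates the inequality: $\psi(U/2)\cdot(U/2)\sim U^2\not\lesssim U\sim\psi(U)$.)

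The cross-term can nonetheless be controlled, but by a different mechanism: all close indices $m$ satisfy $|z_m|\asymp|z_k|$ (you already note $|z_m|<2|z_k|$), so $\log(R/|z_m|)\ge\log(R/|z_k|)-\log 2$, and therefore $N(0,R)\ge(\text{number of close }m)\bigl(\log(R/|z_k|)-\log 2\bigr)$. If $\log(R/|z_k|)\ge 2\log 2$ this gives $(\text{number close})\cdot\log(R/|z_k|)\le 2N(0,R)\lesssim p(R)$ by (\ref{N(0,R)}); if $\log(R/|z_k|)<2\log 2$ the cross-term is $\lesssim n(z_k,|z_k|)\le n(0,2|z_k|)\lesssim p(R)$ by Lemma \ref{number}. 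In other words, the needed bound is a consequence of (\ref{N(0,R)}), not of a convexity/doubling estimate on $p$. The paper avoids the issue entirely: Proposition \ref{intvar} is proved by induction using the divided-difference recursion $\phi_q=(w_q-P_{q-1}(z_q))/\Pi_{q-1}(z_q)$, and the analogue of your $S_k(R)$ appears only in the form $\prod_{k<j}|z_j-z_k|^{-m_k}$, which Lemma \ref{useful}(iii) bounds directly by splitting $\log(R/|z_j-z_k|)=\log(|z_j|/|z_j-z_k|)+\log(R/|z_j|)$ and handling the cross-term $(m_1+\cdots+m_{j-1})\log(R/|z_j|)$ via the trivial inequality $N(0,R)\ge\bigl(\sum_{k\le j}m_k\bigr)\log(R/|z_j|)$ --- precisely the $(\ref{N(0,R)})$-based estimate you need, not the false convexity inequality.
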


In this paper, we are concerned by determining the subpace   $\rho(A_p(\C))$ of $A(V)$ in the case where condition (\ref{N(0,R)}) is verified.

To any $W=\{w_{j,l}\}_{j,0\le l\le m_j-1}\in A(V)$, we associate  the sequence of divided differences $\Phi(W)=\{\phi_{j,l}\}_{j,0\le l\le m_j-1}$ defined by induction as follows :

We will denote by
$$\Pi_q(z)=\prod_{k=1}^q (z-z_k)^{m_k}, \hbox{for all } q\ge 1.$$

$$\phi_{1,l}=w_{1,l}, \hbox{for all}\ \  0\le l\le m_1-1, $$
$$\phi_{q,0}=\frac{w_{q,0}-P_{q-1}(z_q)}{\Pi_{q-1}(z_q)},$$
$$ \phi_{q,l}=\frac{w_{q,l}-\frac{P_{q-1}^{(l)}(z_q)}{l!}-\sum_{j=0}^{l-1}\frac{1}{(l-j)!}\Pi_{q-1}^{(l-j)}(z_q)\phi_{q,j}}{\Pi_{q-1}(z_q)}\ \ \hbox{for } 1\le l\le m_q-1$$
where
$$P_{q-1}(z)=\sum_{j=1}^{q-1} \left(\sum_{l=0}^{m_j-1} \phi_{j,l} (z-z_j)^l\prod_{t=1}^{ j-1} (z-z_t)^{m_t}\right).$$

\begin{remark}\label{Newton}

 Actually,  $P_q$ is the polynomial interpolating the values $w_{j,l}$ at the points $z_j$ with multiplicity $m_j$, for $1\le j\le q$. It is the unique polynomial of degree $m_1+\cdots+ m_q-1$ such that 
$$\frac{P_q^{(l)}(z_j)}{l!}=w_{j,l}$$
for all $1\le j\le q$ and $0\le l \le m_j-1$.
\end{remark}

Examples. 

\begin{itemize}
\item Let $W_0=\{\delta_{1,j}\delta_{l,m_1-1}\}_{j,0\le l<m_j}$. 

Using the fact that $P_j(z)$ must coincide with $\displaystyle (z-z_1)^{m_1-1}\prod_{k=2}^{j-1}(z-z_j)^{m_j}$ and identifying the coefficient in front of $z^{m_1+\cdots+m_{j-1}+l-1}$, we find : 
\[ 
\phi_{1,1}=\phi_{1,2}=\cdots=\phi_{1,m_1-2}=0,\ \ \phi_{1,m_1-1}=1,
\]
and, for $j\ge 2$, $0\le l\le m_j-1$,
\[
\phi_{l,j}=(z_1-z_j)^{-(l+1)}\prod_{k=2}^{j-1} (z_1-z_k)^{-m_k}.
\]

\item In the special case where $m_j=1$ for all $j$ and  $W=\{w_j\}_j$, we have for all $j\ge 1$,
$$\phi_j=\sum_{k=1}^j w_k \prod_{1\le l\le j, l\not=k}(z_k-z_l)^{-1}.$$
To compute the coefficients, we may use the fact that $P_j(z)$ must coincide with  the Lagrange polynomial $\displaystyle \sum_{n=1}^j w_n\prod_{1\le k\le j, k\not=n} \frac{(z-z_k)}{(z_n-z_k)}$ and identify the coefficient in front of $z^{j-1}$.
\end{itemize}
\bigskip

Let us denote by $\tilde A_p(V)$ the subspace of $A(V)$ consisting of the elements $W\in A(V)$ such that the following condition holds :

\begin{equation}\label{Im}
 \textrm{ for all }  n\ge 0,\, \vert z_j\vert\le 2^n\, \textrm{ and }  0\le l\le m_j-1,\ \  \vert \phi_{j,l}\vert 2^{n(l+m_1+...+m_{j-1})}\le A \exp(B p(2^n)),
\end{equation}
where $A$ and $B$ are positive constants only depending on $V$ and $W$. 

We have chosen to use a covering of the complex plane by discs $D(0,2^n)$, but we can replace $2^n$ by any $R^n$ with $R>1$.

\begin{lemma}\label{W0}
Assume $z_1=0$. Then,  condition (\ref{N(0,R)}) holds if and only if
 $$W_0=\{\delta_{1,j}\delta_{l,m_1-1}\}_{j,0\le l<m_j} \in \tilde A_p(V).$$
\end{lemma}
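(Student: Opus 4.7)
The key is to use the explicit formula for the divided differences of $W_0$ given in the second example. With $z_1 = 0$, we have $\phi_{1,l} = \delta_{l,m_1-1}$ and for $j \ge 2$, $\phi_{j,l} = (-z_j)^{-(l+1)} \prod_{k=2}^{j-1} (-z_k)^{-m_k}$, so that
\[
|\phi_{j,l}| = |z_j|^{-(l+1)} \prod_{k=2}^{j-1} |z_k|^{-m_k}.
\]
The plan is to rewrite the logarithm of the left-hand side of (\ref{Im}) as
\[
L_{n,j,l} := (l+1)\log(2^n/|z_j|) + \sum_{k=2}^{j-1} m_k \log(2^n/|z_k|) + (m_1 - 1)\log 2^n
\]
and to compare it with the integrated counting function
\[
N(0, 2^n) = \sum_{0 < |z_k| \le 2^n} m_k \log(2^n/|z_k|) + m_1 \log 2^n,
\]
the decomposition of the exponent $n(l + m_1 + \cdots + m_{j-1})$ in (\ref{Im}) being precisely what makes the two expressions comparable.

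For the forward implication, assume (\ref{N(0,R)}). When $j = 1$, the only nonzero entry is $\phi_{1, m_1 - 1} = 1$ and the required bound $2^{n(m_1-1)} \le A e^{B p(2^n)}$ follows immediately from property (a) of the weight. When $j \ge 2$ and $|z_j| \le 2^n$, the inequality $l + 1 \le m_j$ together with the nonnegativity of $\log(2^n/|z_k|)$ for $k \le j$ yields
\[
L_{n,j,l} \le \sum_{k=2}^{j} m_k \log(2^n/|z_k|) + (m_1 - 1) \log 2^n \le N(0, 2^n),
\]
and (\ref{N(0,R)}) at $R = 2^n$ gives the exponential bound required in (\ref{Im}).

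For the converse, fix $n$ and take $j$ to be the largest index with $|z_j| \le 2^n$, together with $l = m_j - 1$. (If no such $j \ge 2$ exists, then $N(0, 2^n) = m_1 \log 2^n$ is already controlled by property (a).) With this choice the summations in $L_{n,j,l}$ exactly exhaust the nonzero points of $V$ inside $D(0, 2^n)$, giving the identity
\[
L_{n,j,l} = \sum_{k=2}^{j} m_k \log(2^n/|z_k|) + (m_1 - 1) \log 2^n = N(0, 2^n) - \log 2^n.
\]
The hypothesis $W_0 \in \tilde A_p(V)$ then yields $N(0, 2^n) \le B p(2^n) + \log A + \log 2^n$, and property (a) absorbs the extra $\log 2^n$ term into $C p(2^n)$. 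For arbitrary $R > 0$, choose $n$ with $2^n \le R < 2^{n+1}$ and use the doubling property (c) together with the radial monotonicity of the subharmonic weight to pass from $N(0, 2^n)$ and $p(2^n)$ to $N(0, R)$ and $p(R)$. The only delicate step is the precise matching $m_1 + \sum_{k=2}^{j-1} m_k + (l+1) = m_1 + m_2 + \cdots + m_j$ at $l = m_j - 1$, which is what allows the identity $L_{n,j,l} = N(0, 2^n) - \log 2^n$ to come out exactly.
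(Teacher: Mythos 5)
Your argument is correct and follows the same route as the paper's proof: express the divided differences of $W_0$ explicitly from the example formula, rewrite the logarithm of the quantity in (\ref{Im}) as $L_{n,j,l}$ and compare it to $N(0,2^n)$, bounding $L_{n,j,l}\le N(0,2^n)$ in the forward direction and choosing $j$ maximal with $|z_j|\le 2^n$ together with $l=m_j-1$ in the converse. You are somewhat more explicit than the paper about the degenerate case $j=1$ and about absorbing the stray $\log 2^n$ term via property (a), but these are presentation details rather than a different method.
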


\begin{proof}
Suppose that (\ref{N(0,R)}) is verified. Let $n\in \N$, $0<\vert z_j\vert \le 2^n$ and $0\le l\le m_j-1$.
We have by definition, 
\[ N(0,2^n)=\sum_{0<\vert z_k\vert\le 2^n}m_k\ln\frac{2^n}{\vert z_k\vert}+m_1\ln (2^n)\ge \ln \left( 2^{n(m_1+\cdots+m_j)}\prod_{k=2}^j \vert z_k\vert^{-m_k}\right),
\]
\[
\vert \phi_{j,l}\vert= \vert z_j\vert^{m_j-l-1}\prod_{k=2}^j \vert z_k\vert ^{-m_k}\le 2^{n(m_j-l-1)} \prod_{k=2}^j \vert z_k\vert ^{-m_k} \le \exp(N(0,2^n))2^{-n(m_1+\cdots+m_{j-1}+l+1)}.
\]
We readily obtain the estimate (\ref{Im}), using that $N(0,2^n)\le A p(2^n)+B$.

 Conversely, let $n$ be an integer. Using the estimate (\ref{Im}) when $j\ge 2$ is the number of distinct points $\{z_k\}$ in $D(0,2^n)$ and $l=m_j-1$, we have
 \[
 N(0,2^n)=\ln \left( 2^{n(m_1+\cdots+m_j)}\prod_{k=2}^j \vert z_k\vert^{-m_k}\right)=\ln (2^{n(m_1+\cdots+m_j)}\vert \phi_{j,m_j-1}\vert )\le A p(2^n)+B.
 \]
Then, we deduce the estimate for $N(0,R)$ using the above one with $2^{n-1}\le R<2^n$ and the doubling property of $p$. 
\end{proof}

We define the following norm :

\[
\Vert W \Vert_B=\sup_n \Vert W^{(n)}\Vert_n \exp{(-B p(2^n))}
\]
where 
\[ 
\Vert W^{(n)}\Vert_n=\sup_{\vert z_j\vert\le 2^n}\sup_{0\le l\le m_j-1}\vert \phi_{j,l}\vert 2^{-n(l+m_1+...+m_{j-1})},
\]

The space $\tilde A_p(V)$ can also be seen as an (LF)-space as an inductive limit of the Banach spaces 
$$\tilde A_{p,B}(V)=\{W\in A(V),\ \Vert W\Vert_B <\infty\}.$$

We are now ready to state the main results.

\begin{proposition}\label{nec}
The restriction operator $\rho$ maps continously $A_p(\C)$ into $\tilde A_p(V)$.
\end{proposition}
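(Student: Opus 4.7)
The plan is to exploit a Cauchy-type integral representation for the coefficients $\phi_{j,l}$ of the Hermite--Newton interpolant. For $f\in A_p(\C)$, the recursive definition of the $\phi_{j,l}$ makes them the divided differences of $f$ at $z_1^{m_1},\ldots,z_{j-1}^{m_{j-1}},z_j^{l+1}$, and standard theory (or a short induction on $j$ and $l$ using the defining formulas) gives
\[
\phi_{j,l}=\frac{1}{2\pi i}\oint_{\gamma}\frac{f(\zeta)\,d\zeta}{(\zeta-z_j)^{l+1}\prod_{k=1}^{j-1}(\zeta-z_k)^{m_k}},
\]
for any rectifiable contour $\gamma$ enclosing $z_1,\ldots,z_j$ once positively. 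Equivalently, one can argue directly from the fact that $f-P_{j-1}$ is divisible by $\prod_{k<j}(\zeta-z_k)^{m_k}$ so that the integral picks out the Taylor coefficient of order $l$ of $(f-P_{j-1})/\prod_{k<j}(\zeta-z_k)^{m_k}$ at $z_j$, which is precisely $\phi_{j,l}$.

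Given $n\ge 0$ and an index $j$ with $|z_j|\le 2^n$ (so that $|z_k|\le 2^n$ for every $k\le j$), I would take $\gamma$ to be the circle $|\zeta|=2^{n+1}$. On this circle $|\zeta-z_k|\ge 2^{n+1}-2^n=2^n$ for all $k\le j$, and since $p$ is radial, $|f(\zeta)|\le \|f\|_B\exp(Bp(2^{n+1}))$. Bounding the integral in the obvious way yields
\[
|\phi_{j,l}|\le \|f\|_B\cdot\frac{2^{n+1}\exp(Bp(2^{n+1}))}{2^{n(l+1+m_1+\cdots+m_{j-1})}}
=2\,\|f\|_B\,\exp(Bp(2^{n+1}))\,2^{-n(l+m_1+\cdots+m_{j-1})}.
\]
The doubling property (c) supplies a constant $C>0$ with $p(2^{n+1})\le Cp(2^n)$, so setting $B'=BC$ we conclude
\[
|\phi_{j,l}|\,2^{n(l+m_1+\cdots+m_{j-1})}\le 2\,\|f\|_B\,\exp(B'p(2^n)),
\]
which is exactly condition (\ref{Im}) and, simultaneously, the norm bound $\|\rho(f)\|_{B'}\le 2\|f\|_B$.

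Thus $\rho$ maps $A_{p,B}(\C)$ continuously into $\tilde A_{p,B'}(V)$ for every $B>0$ with $B'=BC$; continuity with respect to the (LF)-topologies then follows from the universal property of the inductive limit. The only genuinely non-routine step is establishing the integral representation of $\phi_{j,l}$; everything else is a direct application of radial estimates and the doubling property. I expect to handle the representation either by a clean induction that matches the recursive formulas for $\phi_{q,0}$ and $\phi_{q,l}$ in the paper, or by invoking it as a known fact about Hermite interpolation and divided differences.
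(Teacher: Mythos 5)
Your proof is correct and follows essentially the same route as the paper: both identify the $\phi_{j,l}$ as confluent divided differences of $f$ and bound them on the scale $2^n$, then conclude by the doubling property; the only difference is that the paper cites a ready-made divided-difference estimate (\cite[Lemma 6.2.9]{Be-Ga}, Lemma~\ref{estim}) while you rederive it directly from the Hermite--Cauchy contour integral representation. Your inline derivation (and the Taylor-coefficient interpretation of $\phi_{j,l}$) is sound, and the resulting bound $\Vert \rho(f)\Vert_{B'}\le 2\Vert f\Vert_B$ matches the paper's up to inessential constants.
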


\begin{proposition}\label{subspace}
Under the assumption of condition (\ref{N(0,R)}),
$\tilde A_p(V)$ is a subspace of $A_p(V)$.
\end{proposition}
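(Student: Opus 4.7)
The plan is to invert the Newton expansion via Remark~\ref{Newton}: the datum $w_{j,l}$ is the $l$-th Taylor coefficient of the interpolating polynomial $P_j$ at $z_j$, namely $w_{j,l}=P_j^{(l)}(z_j)/l!$, where
$$P_j(z)=\sum_{j'=1}^{j}\sum_{l'=0}^{m_{j'}-1}\phi_{j',l'}(z-z_{j'})^{l'}\prod_{t=1}^{j'-1}(z-z_t)^{m_t}.$$
For a fixed $j$, I pick the smallest integer $n\ge 0$ with $|z_j|\le 2^n$ and plan to apply Cauchy's estimate on the circle $|z-z_j|=2^n$. On that circle $|z|\le 2^{n+1}$, and since the sequence is ordered by modulus, $|z_{j'}|\le|z_j|\le 2^n$ for all $j'\le j$, so $|z-z_{j'}|\le 3\cdot 2^n$ and similarly for each $|z-z_t|$. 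Combining this with the hypothesis $W\in\tilde A_p(V)$ (estimate (\ref{Im})) gives, for each term in $P_j$,
$$|\phi_{j',l'}|(3\cdot 2^n)^{l'+m_1+\cdots+m_{j'-1}}\lesssim 3^{m_1+\cdots+m_j}\exp(Bp(2^n)).$$

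Summing over $(j',l')$ yields $\max_{|z-z_j|=2^n}|P_j(z)|\lesssim (m_1+\cdots+m_j)\,3^{m_1+\cdots+m_j}\exp(Bp(2^n))$. This is where condition (\ref{N(0,R)}) enters: by the standard comparison
$$n(0,2^n)\ln 2 \le N(0,2^{n+1})\le A\,p(2^{n+1})+B\le A'p(2^n)+B,$$
(monotonicity of $t\mapsto n(0,t)$ and the doubling property of $p$), and because $m_1+\cdots+m_j\le n(0,2^n)$, the combinatorial factor $3^{m_1+\cdots+m_j}$ is dominated by $\exp(Cp(2^n)+D)$. Hence $\max_{|z-z_j|=2^n}|P_j|\lesssim\exp(B'p(2^n))$.

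Cauchy's formula then gives $|w_{j,l}|\le 2^{-nl}\max_{|z-z_j|=2^n}|P_j(z)|$, so after summing the resulting geometric series in $l$,
$$\sum_{l=0}^{m_j-1}|w_{j,l}|\lesssim \exp(B'p(2^n)).$$
The minimality of $n$ forces $2^n\le 2|z_j|$ when $n\ge 1$, so by the radial monotonicity and the doubling property of $p$ we get $p(2^n)\le Cp(z_j)$; the case $n=0$ (i.e.\ $|z_j|\le 1$) is absorbed into the constants since $p(1)<\infty$. This is exactly the bound defining $A_p(V)$, so $W\in A_p(V)$ and $\tilde A_p(V)\subset A_p(V)$.

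The main obstacle is absorbing the combinatorial factor $3^{m_1+\cdots+m_j}$ arising from the total degree of the Newton basis pieces: this is precisely the place that requires the geometric hypothesis (\ref{N(0,R)}), and without it the argument would collapse, since the length of the Newton expansion at $z_j$ is unconstrained.
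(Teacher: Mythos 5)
Your proof is correct and follows essentially the same route as the paper's: write $w_{j,l}=P_j^{(l)}(z_j)/l!$ for the Newton interpolating polynomial, bound $|P_j|$ on a slightly larger disc using the defining estimate of $\tilde A_p(V)$, recover the derivatives via a Cauchy estimate, and absorb the resulting exponential-in-$(m_1+\cdots+m_j)$ combinatorial factor by the bound $m_1+\cdots+m_j\le n(0,2^n)\lesssim p(2^n)$, which is exactly where (\ref{N(0,R)}) enters (this is the paper's Lemma~\ref{number}, which you rederive inline). The only cosmetic differences are the choice of Cauchy circle (you use radius $2^n$ centered at $z_j$, the paper uses radius $2$ centered at the evaluation point inside Lemma~\ref{induction}), the resulting base of the combinatorial factor ($3$ versus $2^2$), and that the paper factors the polynomial bound out as Lemma~\ref{induction} because it is reused elsewhere, whereas you inline it.
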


 \begin{proposition}\label{intvar}
 If conditions (\ref {N(0,R)}) and (\ref{N(z,z)}) are verified, then $\tilde A_p(V)=A_p(V)$. 
\end{proposition}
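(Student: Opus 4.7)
By Proposition~\ref{subspace} we already have $\tilde A_p(V)\subset A_p(V)$, so the task is to prove the reverse inclusion: starting from $W=\{w_{j,l}\}\in A_p(V)$ (i.e.\ $|w_{j,l}|\le A e^{Bp(z_j)}$), show that the divided differences $\phi_{j,l}$ satisfy the estimate (\ref{Im}).

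The plan is to bound $|\phi_{j,l}|$ directly from an explicit expansion of its recursive definition. Unfolding that recursion (or equivalently applying Hermite's formula) writes $\phi_{j,l}$ as a finite linear combination $\sum_{k\le j}\sum_{i<m_k}c_{k,i}\,w_{k,i}$, in which each coefficient is a product of factors of the form $(z_k-z_m)^{-a_m}$ with $a_m\le m_m$; the multiplicity-one case is precisely the Lagrange formula from the example after Remark~\ref{Newton}. Fix $n$ with $|z_j|\le 2^n$, so every $z_k$, $k\le j$, lies in $\overline{D(0,2^n)}$. The assumption $W\in A_p(V)$ then gives $|w_{k,i}|\le A e^{Bp(z_k)}\le A e^{Bp(2^n)}$, the last inequality following from the fact that a finite, non-negative, radial subharmonic function is non-decreasing in $|z|$. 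Distributing the prefactor $2^{n(l+m_1+\cdots+m_{j-1})}$ into $|c_{k,i}|$ converts each $(z_k-z_m)^{-a_m}$ into $(2^n/|z_k-z_m|)^{a_m}$; factors with $|z_k-z_m|>2^n$ are $\le 1$ and may be discarded, while the surviving factors (with the $m_k\ln 2^n$ centre contribution) are bounded above by $\exp(N(z_k,2^{n+1}))$. Any leftover power of $2^n$ produced by mismatches of exponents is polynomial in $2^n$ and hence absorbed into $e^{B'p(2^n)}$ via the weight condition $\ln(1+|z|^2)\le Cp(z)$.

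The analytic heart of the argument is therefore the estimate
\[
N(z_k,2^{n+1})\le A'p(2^n)+B'\qquad\text{for every }k\text{ with }|z_k|\le 2^n.
\]
I split $N(z_k,2^{n+1})=N(z_k,|z_k|)+\int_{|z_k|}^{2^{n+1}}n(z_k,t)\,t^{-1}\,dt$. Condition (\ref{N(z,z)}) controls the first summand by $Ap(z_k)+B\le Ap(2^n)+B$. For the integral, the inclusion $D(z_k,t)\subset D(0,2t)$ valid for $t\ge|z_k|$ yields $n(z_k,t)\le n(0,2t)$; the substitution $u=2t$ then bounds the integral by $N(0,2^{n+2})$ (possibly plus a harmless term of size $\lesssim n(0,2|z_k|)\lesssim p(2^n)$ coming from mass near the origin). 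Condition (\ref{N(0,R)}) and the doubling property of $p$ give $N(0,2^{n+2})\le A''p(2^n)+B''$. Finally, the number of indices $k\le j$ is at most $n(0,2^n)\le N(0,2^{n+1})/\ln 2\lesssim p(2^n)$, so summing over $k$ and $i$ absorbs into a single $A'''e^{B'''p(2^n)}$, which is precisely~(\ref{Im}).

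The main obstacle I anticipate is the first step in the presence of multiplicities: writing the divided differences in an explicit enough form (and verifying the exponent bound $a_m\le m_m$) to recognize $\exp(N(z_k,2^{n+1}))$ cleanly after the distribution of the $2^n$-powers. The remaining ingredients—the counting-function estimate combining both hypotheses and the polynomial bound on the number of summands—are routine once the algebraic bookkeeping is in place. One could of course shortcut everything by invoking Theorem~\ref{doub}: Propositions~\ref{nec} and \ref{subspace} would then sandwich $\tilde A_p(V)$ between $\rho(A_p(\C))=A_p(V)$ and $A_p(V)$. But the paper's stated goal is to \emph{derive} the sufficiency in Theorem~\ref{doub} from its main theorem, so a direct divided-difference argument is the honest route.
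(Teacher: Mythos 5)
Your approach is genuinely different from the paper's: you attack the explicit Lagrange/Hermite expansion $\phi_{j,l}=\sum_{k\le j}\sum_{i<m_k}c_{k,i}w_{k,i}$, whereas the paper never unfolds the recursion — it works directly with the recursive formula $\phi_{q,l}=\bigl(w_{q,l}-\cdots\bigr)/\Pi_{q-1}(z_q)$, bounding $P_{q-1}$ and $\Pi_{q-1}$ and their derivatives through Lemma~\ref{induction} and then dividing by $\Pi_{q-1}(z_q)$, whose reciprocal is controlled by the exact analogue of your $N(z_k,2^{n+1})$ estimate (Lemma~\ref{useful}(iii)). Your derivation of the estimate $N(z_k,2^{n+1})\lesssim p(2^n)$ is correct and parallels the paper's Lemma~\ref{useful}, and in the multiplicity-one case your argument goes through cleanly because there the coefficients really are the Lagrange products with exponent exactly $m_m=1$.

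But the obstacle you flag is not a bookkeeping nuisance: the claimed exponent bound $a_m\le m_m$ is \emph{false} whenever some $m_j\ge 2$, and this breaks the argument. To see it, take two points with $m_1=M\ge 2$ and $m_2=1$ and expand $\phi_{2,0}=\Delta^{M}f(z_1^{(M)},z_2)$. The coefficient of $w_{1,0}=f(z_1)$ is $\pm(z_1-z_2)^{-M}$, so here $a_2=M>1=m_2$. More generally, differentiating $\prod_{m\ne k}(x-z_m)^{-m_m}$ to extract the Hermite coefficients systematically produces exponents $a_m=m_m+b_m$ with $\sum_m b_m$ equal to the number of derivatives taken, which can be as large as $m_k-1$. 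With $a_m>m_m$ you cannot bound $\prod_m(2^n/|z_k-z_m|)^{a_m}$ by $\exp(N(z_k,2^{n+1}))$: the counting function only controls each factor with the weight $m_m$. The leftover factors $\prod_m(2^n/|z_k-z_m|)^{a_m-m_m}$, with total excess exponent up to $m_k-1\lesssim p(2^n)$, are \emph{not} merely a polynomial power of $2^n$; naive estimates give $\exp\bigl(Cp(2^n)^2\bigr)$ rather than $\exp\bigl(Bp(2^n)\bigr)$, which is exactly what condition~(\ref{Im}) forbids. The only way to tame these excesses is to split each excess factor as $(2^n/|z_j|)(|z_j|/|z_j-z_m|)$, invoke the cumulative version of Lemma~\ref{useful}(i) to control the first piece, and match the multiplicity weights carefully across all the points — this is precisely what the paper's recursive organization achieves automatically, since at each step only $\Pi_{q-1}(z_q)^{-1}=\prod_{k<q}|z_q-z_k|^{-m_k}$ (with the correct exponents) appears. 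So your plan needs a substantial repair at the step you yourself labelled the ``analytic heart'': the expansion must be regrouped so that the powers of $(z_k-z_m)^{-1}$ never exceed $m_m$ before the $N(\cdot)$-bound is applied, which in effect reconstructs the paper's inductive proof.
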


\begin{theorem}\label{main}
If condition (\ref{N(0,R)}) holds, then 
$$\tilde A_p(V)=  \rho(A_p(\C)).$$

In other words, condition  (\ref{N(0,R)}) implies that the map $\rho : A_p(\C)\rightarrow \tilde A_p(V)$ is surjective. 
\end{theorem}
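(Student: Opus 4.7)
The inclusion $\rho(A_p(\C))\subset \tilde A_p(V)$ is Proposition \ref{nec}, so only the surjectivity of $\rho$ remains. Fix $W=\{w_{j,l}\}\in \tilde A_p(V)$ satisfying (\ref{Im}); the plan is to follow the Bombieri--H\"ormander $\bar\partial$-scheme announced in the introduction.

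\emph{Step 1 (smooth quasi-interpolant).} Choose a smooth dyadic partition of unity $\{\chi_n\}_{n\ge 0}$ on $\C$ with $\chi_n$ supported in $\{2^{n-1}<|z|<2^{n+1}\}$ and $|\bar\partial\chi_n|\lesssim 2^{-n}$, and for each $n$ let $P_n$ denote the Newton--Hermite polynomial (Remark \ref{Newton}) interpolating $W$ at every $z_j\in D(0,2^{n+1})$ with its prescribed multiplicity $m_j$. Set $F=\sum_n \chi_n P_n$. A Leibniz computation combined with $\sum_n \chi_n\equiv 1$ and the fact that all relevant $P_n$ share the same jet of order $m_j-1$ at $z_j$ yields $\rho(F)=W$. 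This is the content of Lemma \ref{F}. Using $\sum_n\bar\partial\chi_n\equiv 0$, one rewrites $\bar\partial F=\sum_n (\bar\partial\chi_n)(P_n-P_{n-1})$; estimating $P_n-P_{n-1}$ on $|z|\sim 2^n$ through the explicit Newton expansion together with hypothesis (\ref{Im}) gives
\[
|F(z)|\lesssim \exp(Bp(z)),\qquad |\bar\partial F(z)|\lesssim \exp(Bp(z)),
\]
and $\bar\partial F$ vanishes in a neighbourhood of every $z_j$.

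\emph{Step 2 (subharmonic weight with prescribed poles).} Invoking condition (\ref{N(0,R)}), produce a subharmonic function $U:\C\to [-\infty,+\infty)$ such that $U(z)\le Ap(z)+B$ globally and $U(z)=2m_j\log|z-z_j|+O(1)$ near each $z_j$; this will be Lemma \ref{U}. A model is a truncated Weierstrass-type potential
\[
U(z)=\sum_j m_j\log\frac{|z-z_j|^2}{|z|^2+|z_j|^2+1}+H(z),
\]
whose global bound is obtained by dominating the tail by $N(0,|z|)\lesssim p(|z|)+1$ and using the doubling property (c). Near each $z_j$, $e^{-U(z)}$ behaves like $|z-z_j|^{-2m_j}$, so $e^{-U}$ is \emph{not} locally integrable at $z_j$.

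\emph{Step 3 (H\"ormander + Bombieri).} Apply H\"ormander's theorem \cite{Ho1} with subharmonic weight $\varphi=U+Cp$, $C$ large, to obtain $u\in L^2_{\mathrm{loc}}(\C)$ solving $\bar\partial u=-\bar\partial F$ with $\int_\C|u|^2 e^{-\varphi}(1+|z|^2)^{-2}\,dA<\infty$; the required bound $\int|\bar\partial F|^2 e^{-\varphi}\,dA<\infty$ is immediate from Step 1. Then $f:=F+u$ is entire, and a standard mean-value argument on holomorphic functions upgrades the $L^2$-bound on $u$ to a pointwise estimate $|f(z)|\lesssim \exp(B'p(z))$, so $f\in A_p(\C)$. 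Finally, the non-integrability of $e^{-U}$ at each $z_j$ forces, by Bombieri's argument, the holomorphic object $u$ to vanish to order at least $m_j$ at $z_j$; hence $\rho(u)=0$ and $\rho(f)=\rho(F)=W$. The decisive difficulty is Step 2: producing a single subharmonic $U$ that simultaneously carries the prescribed logarithmic poles at \emph{every} $z_j$ and is still dominated globally by $p$. This is exactly where condition (\ref{N(0,R)}) enters, through a careful accounting of $\sum m_j\log|z-z_j|$ against $N(0,|z|)$; once $U$ is built, Steps 1 and 3 are technical but standard uses of partitions of unity and $L^2$-$\bar\partial$-theory.
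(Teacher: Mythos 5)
Your overall strategy coincides with the paper's: a smooth Newton--polynomial quasi-interpolant $F$ built from a dyadic partition of unity, a subharmonic weight $U$ with prescribed logarithmic singularities at the $z_j$, H\"ormander's $L^2$ theorem for $\bar\partial$, and Bombieri's non-integrability observation to force $u=f-F$ to kill the right jets. However, there is a genuine gap in the way you pass from Step~1 to Step~3, and it sits precisely at the point where the paper is most careful.

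You assert in Step~1 that ``$\bar\partial F$ vanishes in a neighbourhood of every $z_j$,'' and in Step~3 you conclude that the H\"ormander source-term bound ``$\int_\C|\bar\partial F|^2 e^{-\varphi}\,dA<\infty$ is immediate.'' Neither is correct as stated. With a fixed dyadic partition of unity, the transition annuli for $\bar\partial\chi_n$ cover all of $\{|z|>1\}$ except a set of measure zero, so some $z_j$ necessarily lie in the support of $\bar\partial F$; what is true is that $\bar\partial F(z)=-\bar\partial\chi_n(z)\,(P_n-P_{n-1})(z)$ there, and $P_n-P_{n-1}$ vanishes to order $m_j$ at each common node $z_j$, so $\bar\partial F$ has a \emph{finite-order zero} at $z_j$, not a neighbourhood of zeros. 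Since $e^{-U}$ blows up like $|z-z_j|^{-2m_j}$, the finiteness of $\int|\bar\partial F|^2 e^{-U-Cp}$ is not at all immediate from $|\bar\partial F|\lesssim e^{Bp}$: one needs the quantitative form of the cancellation. This is exactly the content of the paper's Lemma \ref{F}(iii), which records the factored estimate $|\bar\partial F(z)|\lesssim 2^{-n(m_1+\cdots+m_{q_n})}\prod_{k\le q_n}|z-z_k|^{m_k}\,e^{Bp(2^n)}$, and of Lemma \ref{U}(iii), which couples this with $U$ to obtain the pointwise bound $|\bar\partial F|^2 e^{-U}\le Ae^{Bp}$. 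Without producing this coupled estimate (which in turn uses condition (\ref{N(0,R)}) via Lemma \ref{number} to control $\sum m_k\log\frac{2^n}{|z-z_k|}$), the H\"ormander step does not go through.

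A secondary, lesser point: the candidate $U(z)=\sum_j m_j\log\frac{|z-z_j|^2}{|z|^2+|z_j|^2+1}+H(z)$ is not obviously subharmonic --- each summand has Laplacian $4\pi m_j\delta_{z_j}-\frac{4(|z_j|^2+1)}{(|z|^2+|z_j|^2+1)^2}$, so the regular part contributes negatively --- and the unspecified $H$ is doing real work. The paper handles this by first estimating $\Delta V$ from below in terms of $\bigl(n(0,2^3|z|)-n(0,2|z|)\bigr)/|z|^2$ and then building an explicit radial corrector $W(z)=g(2^3|z|)$ with $g(t)=\int_0^t s^{-2}\int_0^s n(0,\sigma)\,d\sigma\,ds$ whose Laplacian dominates the negative part while staying $\lesssim p$; this step, and the factored $\bar\partial F$ estimate above, are the two places where (\ref{N(0,R)}) is really used, and your sketch leaves both of them open.
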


The combination of Proposition \ref{intvar} and Theorem \ref{main} shows easily the sufficiency in Theorem \ref{doub}.

Using the  results given so far, we can deduce next theorem  : 

\begin{theorem}\label{unicity}
The following assertions are equivalent :
\begin{item}
\item[(i)] $V$ is not a uniqueness set for $A_p(\C)$.
\item[(ii)] The map $\rho$ is not injective.
\item[(iii)] $V$ verifies condition (\ref{N(0,R)}).
\item[(iv)] The sequence $W_0=\{\delta_{1,j}\delta_{l,m_1-1}\}_{j,0\le l<m_j}$ belongs to $\rho(A_p(\C))$.
\end{item}
\end{theorem}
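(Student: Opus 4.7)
My plan is to close the chain (i)$\Rightarrow$(iii)$\Rightarrow$(iv)$\Rightarrow$(i) and to note that (i)$\Leftrightarrow$(ii) is immediate from the definition of $\rho$: its kernel consists exactly of the functions in $A_p(\C)$ that vanish on $V$ with the prescribed multiplicities, so $\rho$ fails to be injective precisely when $V$ is not a uniqueness set.

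For (i)$\Rightarrow$(iii), I would follow the observation already sketched after Definition \ref{count}. Starting from a nonzero $f\in A_p(\C)$ with $V\subset f^{-1}(0)$, I would divide by a suitable power of $z$ so that $f(0)\ne 0$ and apply Jensen's formula on $D(0,R)$. Since $N(0,R)$ is dominated by the integrated counting function of the zeros of $f$ and $|f|\le A\exp(Bp)$, the bound (\ref{N(0,R)}) drops out with no extra work.

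The step (iii)$\Rightarrow$(iv) is where the substance of the paper enters. Lemma \ref{W0} says that (\ref{N(0,R)}) forces $W_0\in\tilde A_p(V)$ when $z_1=0$; for $z_1\ne 0$ the same divided-difference computation goes through unchanged, only the factor $(z_j-z_1)$ is affected, and it is controlled by $N(0,R)$ in exactly the same fashion. Once $W_0\in\tilde A_p(V)$, Theorem \ref{main} produces an $f\in A_p(\C)$ with $\rho(f)=W_0$, so $W_0\in\rho(A_p(\C))$. This is the only non-routine implication, and all of its difficulty is inherited from Theorem \ref{main}; the rest of the equivalences amount to bookkeeping.

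For (iv)$\Rightarrow$(i), I would take $f\in A_p(\C)$ with $\rho(f)=W_0$. This $f$ is nonzero because $f^{(m_1-1)}(z_1)/(m_1-1)!=1$, it vanishes at $z_1$ with multiplicity exactly $m_1-1$, and at each $z_j$, $j\ge 2$, with multiplicity at least $m_j$. Setting $g(z)=(z-z_1)f(z)$ keeps us in $A_p(\C)$ (by condition (a) of Definition \ref{defweight} multiplication by a polynomial preserves the growth class), and writing $f(z)=(z-z_1)^{m_1-1}h(z)$ near $z_1$ with $h(z_1)\ne 0$ gives $g(z)=(z-z_1)^{m_1}h(z)$, so $g$ vanishes at $z_1$ with multiplicity $m_1$ and retains every other zero of $f$. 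Hence $V\subset g^{-1}(0)$ with $g\not\equiv 0$, so $V$ is not a uniqueness set and the cycle of implications is complete.
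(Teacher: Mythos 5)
Your overall architecture coincides with the paper's: (i)$\Leftrightarrow$(ii) by the definition of $\rho$, then the cycle (i)$\Rightarrow$(iii)$\Rightarrow$(iv)$\Rightarrow$(i). The steps (i)$\Leftrightarrow$(ii), (i)$\Rightarrow$(iii) via Jensen, and (iv)$\Rightarrow$(i) are fine. For (iv)$\Rightarrow$(i) you take $g(z)=(z-z_1)f(z)$, using that $f$ vanishes at $z_1$ to order exactly $m_1-1$; the paper takes $g(z)=(z-z_1)^{m_1}f(z)$, which does not need that precision. Both land in $A_p(\C)$ by property (a) and vanish on $V$, so both are correct.

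The gap is in (iii)$\Rightarrow$(iv). You assert that for $z_1\ne 0$ ``the same divided-difference computation goes through unchanged, only the factor $(z_j-z_1)$ is affected, and it is controlled by $N(0,R)$ in exactly the same fashion.'' That is not so. For $j\ge 2$ the divided differences of $W_0$ are
\[
\phi_{j,l}=(z_1-z_j)^{-(l+1)}\prod_{k=2}^{j-1}(z_1-z_k)^{-m_k},
\]
so \emph{every} factor is a distance to $z_1$, not to the origin, and the quantity that naturally majorizes $\prod_k|z_1-z_k|^{-m_k}$ is $N(z_1,\cdot)$, not $N(0,\cdot)$. Condition (\ref{N(0,R)}) bounds $\sum m_k\ln\bigl(R/|z_k|\bigr)$ and says nothing directly about $|z_1-z_k|$ when some $z_k$ sits close to $z_1$ (there $|z_1-z_k|$ can be arbitrarily small while $|z_k|$ is comparable to $|z_1|$). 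The paper sidesteps this by translating so that $z_1=0$, after which Lemma~\ref{W0} applies verbatim and Theorem~\ref{main} finishes. If you prefer to avoid the translation, you must first establish that (\ref{N(0,R)}) forces $N(z_1,R)\lesssim p(R)$ for large $R$ (this is true — only finitely many $z_k$ lie near $z_1$ by Lemma~\ref{number}, so the discrepancy between $N(0,R)$ and $N(z_1,R)$ is a bounded quantity plus $O(n(0,2R))\lesssim p(R)$), and then rerun the computation with $N(z_1,\cdot)$ in place of $N(0,\cdot)$. As written, your step asserts a control that condition (\ref{N(0,R)}) does not directly provide.
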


In particular, it shows that condition (\ref{N(0,R)}) is equivalent to the existence of a function  $f\in A_p(\C)$ such that $V\subset f^{-1}(0)$. 
Combined with Theorem \ref{main}, it shows that, if $\rho$ is not injective, then it is surjective and that, if the image contains $W_0$, then it contains the whole $\tilde A_p(V)$.

\begin{proof}[Proof of Theorem \ref {unicity}]\par
As we mentioned before, it is clear that (i) is equivalent to (ii) and that  (i) implies (iii).

 (iv) implies (i) :
We have a function $f\in A_p(\C)$ not identically equal to $0$ such that $f^{(l)}(z_j)=0$ for all $j\not=1$ and for all $0\le l<m_j$.
The function $g$ defined by $g(z)=(z-z_1)^{m_1} f(z)$ belongs to $A_p(\C)$, thanks to property (i) of the weight $p$, and vanishes on every $z_j$ with multiplicity at least $m_j$. 

 (iii) implies (iv) :

Up to a translation, we may suppose that $z_1=0$. By Lemma \ref{W0}, we know that $W_0\in \tilde A_p(\C)$. By Theorem \ref{main}, $W_0\in \rho(A_p(\C))$.

\end{proof}

\section{Proof of the main results.}

\begin{proof}[Proof of Theorem \ref{nec}]\par

We will first recall some definitions about the divided differences an Newton polynomials. We refer the reader to \cite[Chapter 6.2]{Be-Ga} or \cite[Chapter 6]{Is-Ke} for more details.

Let $f\in A(\C)$ and $x_1,\ldots,x_q$ be distinct points of $\C$. The $q$th divided difference of the function $f$ with respect to the  points $x_1,\ldots,x_q$ is defined by
$$\Delta^{q-1}f(x_1,\ldots,x_q)=\sum_{j=1}^q f(z_j)\prod_{1\le k\le q, k\not=j}(x_j-x_k)^{-1}$$
and the  Newton polynomial of $f$ of degree $q-1$ is
$$P(z)=\sum_{j=1}^q\Delta^{j-1} f(x_1,\ldots,x_j)\prod_{k=0}^{j-1}(z-x_k).$$
It is the unique polynomial of degree $q-1$ such that $P_q(z)=f(x_j)$ for all $1\le j\le q$.

When $x_j$, $1\le j\le q$ are each one repeated $l_j$ times, the divided differences are defined by
\begin{equation*}
\begin{split}
\Delta^{l_1+\cdots+l_q-1} & f(\underbrace{x_1,...,x_1}_{l_1},\dots,\underbrace{x_{q-1},...,x_{q-1}}_{l_{q-1}},\underbrace{x_q,...,x_q}_{l_q})\\
& = \frac{1}{l_1! \cdots l_q!}\frac{\partial^{l_1+\cdots+l_j}}{\partial x_1^{l_1}\cdots \partial x_q^{l_q} }\Delta^{q-1}f(x_1,\cdots,x_q).
\end{split}
\end{equation*}

The corresponding Newton polynomial  is the unique polynomial of degree $l_1+\cdots l_q-1$ such that, for all $0\le j \le q$ and $0\le l\le l_j-1$, 
$$P^{(l)}(x_j)=f^{(l)}(x_j).$$
We have the following estimate 
\begin{lemma}\label{estim} \cite[Lemma 6.2.9.]{Be-Ga}\par
Let $f\in A(\C)$, $\Omega$ an open set of $\C$, $\delta>0$ and $x_1,\cdots,x_k$ in $\Omega_0=\{z\in \Omega :\ d(z,\Omega^c)>\delta\}$.
Then
$$\vert \Delta^{k-1} f(x_1,\ldots,x_k)\vert \le \frac{2^{k-1}}{\delta^{k-1}}\sup_{z\in \Omega}\vert f(z)\vert.$$
\end{lemma}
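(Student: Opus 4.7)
The plan is to prove the estimate by induction on $k$. The base case $k=1$ reduces to $|f(x_1)| \le \sup_\Omega |f|$, which holds trivially since $x_1 \in \Omega_0 \subset \Omega$, and matches the bound with $2^0/\delta^0 = 1$.

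For the inductive step, I would relabel the points so that the pair $(x_1, x_k)$ achieves $\max_{i\ne j}|x_i - x_j|$, and then split into two cases according to whether this maximum exceeds $\delta$. If $|x_1 - x_k| \ge \delta$, the classical three-term recurrence
$$\Delta^{k-1} f(x_1,\ldots,x_k) = \frac{\Delta^{k-2} f(x_2,\ldots,x_k) - \Delta^{k-2} f(x_1,\ldots,x_{k-1})}{x_k - x_1}$$
combined with the inductive hypothesis yields the constant $\frac{2}{|x_k-x_1|} \cdot \frac{2^{k-2}}{\delta^{k-2}} \le \frac{2^{k-1}}{\delta^{k-1}}$. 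If $|x_1 - x_k| < \delta$, then every $x_j$ lies in $D(x_1, \delta) \subset \Omega$, and I would instead appeal to the Cauchy integral representation
$$\Delta^{k-1} f(x_1,\ldots,x_k) = \frac{1}{2\pi i}\int_\gamma \frac{f(\zeta)\, d\zeta}{\prod_{j=1}^k (\zeta - x_j)}$$
on a contour $\gamma \subset \Omega$ encircling all the $x_j$'s.

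The main obstacle will be this second case. On the most natural contour $\partial D(x_1, \delta)$, the factors $|\zeta - x_j|$ can be arbitrarily small when some $x_j$ sits near the boundary of the disc, so the crude length-times-supremum estimate of the Cauchy integral is insufficient. Overcoming this — for instance by subdividing the contour according to proximity of $\zeta$ to each $x_j$, or by expanding $1/\prod_j (\zeta - x_j)$ in partial fractions and re-applying the inductive bound to the resulting terms — is what produces the exponential factor $2^{k-1}$ in the final estimate, which is otherwise sharper than the trivial length estimate of a single contour would give.
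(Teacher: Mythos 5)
The paper offers no proof of this lemma at all (it is quoted from Berenstein--Gay \cite{Be-Ga}), so your argument has to stand on its own, and as written it does not: the inductive step is only actually carried out when $|x_1-x_k|\ge\delta$. That half is fine --- the recurrence plus the inductive hypothesis gives the bound $\frac{2}{|x_k-x_1|}\cdot\frac{2^{k-2}}{\delta^{k-2}}\le\frac{2^{k-1}}{\delta^{k-1}}$ --- but the close case, which you yourself identify as the crux, is left as a gesture, and neither of the fixes you float works as described. The partial-fraction route is circular: since $\prod_j(\zeta-x_j)^{-1}=\sum_j\bigl(\prod_{i\ne j}(x_j-x_i)\bigr)^{-1}(\zeta-x_j)^{-1}$, integrating term by term merely reproduces the defining expression $\sum_j f(x_j)\prod_{i\ne j}(x_j-x_i)^{-1}$, so no estimate is gained. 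Subdividing the fixed circle $\partial D(x_1,\delta)$ cannot work either: if $x_2,\dots,x_k$ cluster at distance $\delta(1-\varepsilon)$ from $x_1$, then on an arc of that circle of length comparable to $\delta$ the integrand genuinely has size of order $\sup_\Omega|f|/(\varepsilon\delta)^{k-1}$, so any pointwise estimate along that particular contour blows up as $\varepsilon\to0$; the contour must be moved, not cut into pieces.

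The standard remedy --- which in fact makes the induction and the case distinction unnecessary --- is Hermite's formula over the boundary of $E=\bigcup_{j=1}^k D(x_j,\delta)$. Since each $x_j\in\Omega_0$, we have $\overline E\subset\Omega$; every $\zeta\in\partial E$ lies outside each open disc, so $|\zeta-x_j|\ge\delta$ for all $j$; and $\partial E$ consists of arcs of the $k$ circles, hence has length at most $2\pi k\delta$. The residue theorem then gives $\Delta^{k-1}f(x_1,\dots,x_k)=\frac{1}{2\pi i}\int_{\partial E} f(\zeta)\prod_{j=1}^k(\zeta-x_j)^{-1}\,d\zeta$, whence $\vert\Delta^{k-1}f(x_1,\dots,x_k)\vert\le k\delta\cdot\delta^{-k}\sup_\Omega\vert f\vert=\frac{k}{\delta^{k-1}}\sup_\Omega\vert f\vert\le\frac{2^{k-1}}{\delta^{k-1}}\sup_\Omega\vert f\vert$; this argument also covers repeated points, which is how the lemma is actually used in the paper. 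If you want to keep your induction, this contour argument applied when all pairwise distances are $<\delta$ is exactly what closes your second case, but run on its own it is shorter and yields the sharper constant $k$ in place of $2^{k-1}$.
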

Let $B>0$ be fixed and $f \in A_{p,B}(\C)$. 

Let $n$ be a fixed integer. Let $\vert z_j\vert\le 2^n$ and $0\le l\le m_j-1$. We consider the divided differences of $f$ with respect to the points $z_1,\cdots, z_j$, each $z_k$, $1\le k\le j-1$ repeated $m_k$ times and $z_j$ repeated $l$ times..
  Denote by $M_{j,l}=m_1+\cdots+m_{j-1}+l$, the divided differences are
$$\phi_{j,l}=\Delta^{M_{j,l}} f(\underbrace{z_1,...,z_1}_{m_1\textrm{ times}},\dots,\underbrace{z_{j-1},...,z_{j-1}}_{m_{j-1}\textrm{ times}},\underbrace{z_j,...,z_j}_{l+1\textrm{ times}}).$$

Using  Lemma \ref{estim} with $\Omega=D(0,2^{n+2})$, $\delta=2^{n+1}$, $k=M_{j,l}+1$, we have

$$\vert \phi_{j,l}\vert \le 2^{-nM_{j,l}}\Vert f\Vert_B \exp(Bp(2^{n+2}))\le 2^{-nM_{j,l}}\Vert f\Vert_B \exp(B'p(2^n)).$$
Thus, 
$$\Vert \rho(f)\Vert_{B'}\le \Vert f\Vert_B$$
and this concludes the proof of Proposition \ref{nec}.

\bigskip
\end{proof}

Before proceeding with the proofs of the main results, we need the following lemmas :

\begin{lemma}\label{number}
Condition (\ref{N(0,R)}) implies that there exist constants $A,B>0$ such that, for all $R>0$, 
$$n(0,R)\le A p(R)+B.$$
\end{lemma}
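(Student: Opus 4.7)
The plan is to exploit the integral definition of $N(0,R)$ together with the monotonicity of the counting function $t\mapsto n(0,t)$, and then invoke the doubling property (c) of the weight $p$.

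First I would fix an arbitrary $R>0$ and compare $N(0,2R)$ to $N(0,R)$. Writing
\[
N(0,2R) \ge \int_R^{2R}\frac{n(0,t)-n(0,0)}{t}\,dt + n(0,0)\ln(2R),
\]
and using that $n(0,t)$ is non-decreasing in $t$, I would lower-bound the integrand on $[R,2R]$ by $(n(0,R)-n(0,0))/t$, which yields
\[
N(0,2R) \ge (n(0,R)-n(0,0))\ln 2 + n(0,0)\ln(2R).
\]
Rearranging gives
\[
n(0,R) \le \frac{N(0,2R)}{\ln 2} + n(0,0)\Bigl(1-\frac{\ln(2R)}{\ln 2}\Bigr).
\]

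Next I would apply hypothesis (\ref{N(0,R)}) at radius $2R$ to get $N(0,2R)\le A\,p(2R)+B$, and then use the doubling property $p(2R)\le Cp(R)$ to replace $p(2R)$ by a constant multiple of $p(R)$. The remaining term involving $n(0,0)$ is a fixed constant (it is $m_1$ if $z_1=0$ and $0$ otherwise), so for $R\ge 1$ it is bounded by $-n(0,0)(\ln R)/\ln 2$, and combined with property (a) of the weight (which yields $\ln(1+R^2)\lesssim p(R)$) it too is absorbed into a bound of the form $A'p(R)+B'$. For $R<1$ the counting function $n(0,R)$ is bounded by the constant $n(0,1)$ (or one can simply notice that the conclusion for small $R$ follows from property (a) since $p(R)\ge 0$ and the left-hand side is bounded), so nothing new is needed.

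There is no serious obstacle here: the argument is a routine monotonicity/doubling estimate, and the only delicate bookkeeping is handling the additive $n(0,0)\ln R$ term and the transition between small and large $R$, both of which are absorbed into the constants $A,B$ using properties (a) and (c) of the weight.
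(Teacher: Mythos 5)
Your proof is correct and follows essentially the same route as the paper: bound $n(0,R)$ by $N(0,2R)$ using monotonicity of $n(0,\cdot)$ on $[R,2R]$, then apply the hypothesis together with the doubling property of $p$. You are in fact slightly more careful than the paper's terse one-line chain $n(0,R)\le 2\int_R^{2R}\frac{n(0,t)}{t}dt\le 2N(0,2R)\le Ap(R)+B$, which silently absorbs the $n(0,0)\ln R$ term and the small-$R$ regime into the constants.
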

\begin{proof} Using property (c) of the weight, we have
$$n(0,R)\le 2\int_R^{2R}\frac{n(0,t)}{t}dt\le 2N(0,2R)\le Ap(2R)+B\le A p(R)+B.$$
\end{proof}

\begin{lemma}\label{induction}
Let $W$ be an element of $A(V)$ and $q$ be in $\N^*$. We suppose that for all  $1\le j\le q$, for all  $n\in \N$ such that  $\vert z_q\vert  \le 2^n$ and for all $0\le l\le m_j-1$, we have
\[
\vert \phi_{j,l}\vert 2^{n(l+m_1+...+m_{j-1})}\le A \exp(B p(2^n)),
\]
where $A$ and $B$ are positive constants only depending on $V$ and $W$.

Then, there exist constants $A,B>0$ only depending on $V$ and $W$, such that, for all $n\in \N$ and $\vert z\vert \le 2^n$,
\[
\begin{split}
\sum_{l=0}^{+\infty}\frac{\vert P_q^{(l)}(z)\vert }{l!} & \le A \exp(Bp(2^n))\sum_{j=1}^q 2^{2(m_1+\cdots+m_j)},\\
\sum_{l=0}^{+\infty}\frac {\vert \Pi_q^{(l)}(z)\vert}{l!} & \le 2^{(n+2)(m_1+\cdots+m_q)}.
\end{split}
\]
\end{lemma}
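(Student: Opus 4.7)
The strategy is to bound both $\sum_l|\Pi_q^{(l)}(z)|/l!$ and $\sum_l|P_q^{(l)}(z)|/l!$ via the Taylor identity $f(z+w)=\sum_l(f^{(l)}(z)/l!)w^l$ applied to each factored polynomial. For the second estimate, I would start from the binomial expansion
\[
\Pi_q(z+w) = \prod_{k=1}^q \bigl((z-z_k)+w\bigr)^{m_k} = \sum_{l_1=0}^{m_1}\cdots\sum_{l_q=0}^{m_q} \prod_{k=1}^q \binom{m_k}{l_k}(z-z_k)^{m_k-l_k}\, w^{l_1+\cdots+l_q},
\]
identify the coefficient of $w^l$ with $\Pi_q^{(l)}(z)/l!$, apply the triangle inequality within each such coefficient (all binomial factors being nonnegative), and sum over $l$. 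The resulting multi-sum refactors into a product, giving
\[
\sum_{l\ge 0}\frac{|\Pi_q^{(l)}(z)|}{l!} \le \prod_{k=1}^q (|z-z_k|+1)^{m_k}.
\]
Under $|z|\le 2^n$ and $|z_k|\le|z_q|\le 2^n$ (the implicit constraint carried over from the hypothesis), each factor is at most $2^{n+1}+1\le 2^{n+2}$, yielding the claimed bound $2^{(n+2)(m_1+\cdots+m_q)}$.

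For the first estimate I would decompose
\[
P_q(z) = \sum_{j=1}^q \sum_{l=0}^{m_j-1} \phi_{j,l}\, (z-z_j)^l\, \Pi_{j-1}(z)
\]
and apply the same Taylor/binomial argument to each Newton basis element $(z-z_j)^l \Pi_{j-1}(z)$. This gives
\[
\sum_{k\ge 0}\frac{\bigl|\bigl((z-z_j)^l \Pi_{j-1}\bigr)^{(k)}(z)\bigr|}{k!} \le (|z-z_j|+1)^l \prod_{t=1}^{j-1}(|z-z_t|+1)^{m_t} \le 2^{(n+2)(l+m_1+\cdots+m_{j-1})}.
\]
Multiplying by $|\phi_{j,l}|$ and invoking the hypothesis $|\phi_{j,l}|\,2^{n(l+m_1+\cdots+m_{j-1})}\le A\exp(Bp(2^n))$, each summand is bounded by $A\exp(Bp(2^n))\, 2^{2(l+m_1+\cdots+m_{j-1})}$. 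The inner sum over $l\in\{0,\ldots,m_j-1\}$ is a geometric series in $4^l$ bounded by a constant multiple of $2^{2m_j}$, so after summing in $j$ the total is bounded by $A\exp(Bp(2^n))\sum_{j=1}^q 2^{2(m_1+\cdots+m_j)}$, as required.

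I do not foresee any serious obstacle; the argument is essentially combinatorial. The one subtle point is that one cannot simply set $w=1$ in the Taylor identity to read off the bound on $\sum_l|\Pi_q^{(l)}(z)|/l!$, because the coefficients $\Pi_q^{(l)}(z)/l!$, viewed as polynomials in the $(z-z_k)$'s, carry signs that may cancel. The remedy is to commute the triangle inequality past both the extraction of the $w^l$ coefficient \emph{and} the product structure, which is legitimate precisely because every binomial coefficient $\binom{m_k}{l_k}$ is nonnegative. Everything else is a routine inspection of exponents.
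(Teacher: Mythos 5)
Your proposal is correct, but it follows a genuinely different route from the paper. The paper first bounds $\vert P_q\vert$ and $\vert\Pi_q\vert$ in sup-norm on the larger disc $D(0,2^{n+1})$, then controls each $\vert P_q^{(l)}(z)\vert/l!$ by Cauchy's estimate on a disc of radius $2$ (picking up a factor $2^{-l}$), and finally sums the geometric series $\sum_l 2^{-l}$. You instead read the Taylor coefficients directly off the multinomial expansion of $\Pi_q(z+w)=\prod_k((z-z_k)+w)^{m_k}$ (and of each Newton basis element $(z-z_j)^l\Pi_{j-1}$), push the triangle inequality past the coefficient extraction \emph{and} the product structure — legitimate, as you note, because all the binomial coefficients $\binom{m_k}{l_k}$ are nonnegative — and refactor the multi-sum into the clean product $\prod_k(\vert z-z_k\vert+1)^{m_k}$. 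This is purely algebraic, avoids Cauchy inequalities entirely, and makes the origin of the exponent $2^{(n+2)(m_1+\cdots+m_q)}$ completely transparent; the paper's version reaches the same conclusion slightly less directly but via a more standard mechanism. Your observation about the implicit constraint $\vert z_q\vert\le 2^n$ is also well taken: as literally stated the conclusion ranges over "all $n\in\N$ and $\vert z\vert\le 2^n$", which cannot hold when $2^n<\vert z_q\vert$, and indeed the paper's own proof silently uses $\vert z_j\vert\le 2^n$ (for $j\le q$) in the line "for $j=1,\dots,q$, $\vert z-z_j\vert\le 2^{n+2}$."
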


\begin{proof}
If $\vert z\vert\le 2^{n+1}$, then for $j=1,\cdots,q$, $\vert z-z_j\vert\le 2^{n+2}$,
\[ 
\begin{split}
\vert P_q(z)\vert & \le \sum_{j=1}^q 2^{(n+2)(m_1+\cdots m_{j-1})}\sum_{l=0}^{m_j-1}\vert \phi_{j,l}\vert 2^{(n+2)l}\\
& \le A \exp(Bp(2^n))\sum_{j=1}^q 2^{2(m_1+\cdots+m_j)}
\end{split}
\]
and 
\[
\vert \Pi_q(z)\vert = \prod_{j=1}^q \vert z-z_j\vert^{m_j}\le 2^{(n+2)(m_1+\cdots+m_q)}.
\]

Now for $\vert z\vert \le 2^n$, if  $\vert z-w\vert \le 2$, then $\vert w\vert \le 2^{n+1}$. By the preceding inequalities and Cauchy inequalities, for all $l\ge 0$,
$$\frac{\vert P_q^{(l)}(z)\vert}{l!}\le \frac{1}{2^l} \max_{\vert z-w\vert \le 2} \vert P_q(w)\vert \le \frac {1}{2^l}A \exp(Bp(2^n)) \sum_{j=1}^q 2^{2(m_1+\cdots+m_j)}.$$
We readily obtain the desired estimate for $P_q$. Using Cauchy estimates once again for the function $\Pi_q$ we obtain the second inequality.
\end{proof}

\begin{proof}[Proof of Proposition \ref{subspace}]\par
We assume that condition (\ref{N(0,R)}) holds. Let $W=\{w_{j,l}\}_{j,0\le l\le m_j-1}\in \tilde A_p(V)$.
Let $q\ge 1$ and  $n$ be the integer such that $2^{n-1}\le \vert z_q\vert < 2^n$.
We know that $\displaystyle \frac{P_q^{(l)}(z_q)}{l!}=w_{q,l}$ for every $0\le l\le m_{q-1}$.
By the preceding lemma, 
$$\sum_{l=0}^{m_q-1} \vert w_{q,l}\vert \le \sum_{l=0}^{+\infty}\frac{\vert P_q^{(l)}(z_q)\vert}{l!}  \le A \exp(Bp(2^n))\sum_{j=1}^q2^{2(m_1+\cdots+m_j)}.$$
By Lemma \ref{number}, 
$m_1+\cdots m_j\le n(0,\vert z_j\vert)\le A p(z_j)+B$. Using that $q\le n(0,\vert z_q\vert)\le A p(z_q)+B$, we obtain
$$\sum_{l=0}^{m_q-1} \vert w_{q,l}\vert \le A \exp(Bp(2^n))\le A \exp(Bp(z_q)),$$
that is $W\in A_p(V)$.
\end{proof}

\begin{proof}[Proof of Theorem \ref{intvar}]\par

 We assume that conditions (\ref{N(0,R)}) and (\ref{N(z,z)}) are fulfilled. 
 We already have $\tilde A_p(V)\subset A_p(V)$ by Proposition \ref{subspace}.

Before proving the reverse inclusion, we need  some useful consequences of (\ref{N(0,R)}) and (\ref{N(z,z)}) : 
 
 \begin{lemma}\label{useful}
 There exist constants $A,B>0$ such that, for all  $j\in \N^*$ and for all $n\in  \N$ such that $\vert z_j\vert \le 2^n$, we have 
\begin{itemize}
\item [(i)] $2^{nm_j}\le A\,\vert z_j\vert^{m_j} \exp(Bp(2^n))$,\ \ $2^{n(m_1+\cdots+m_j)}\le A\vert z_j\vert^{m_1+\cdots+m_j} \exp(Bp(2^n))$.
\item [(ii)]  $\vert z_j\vert^{m_j}\le A \exp(Bp(z_j))$,
\item[(iii)] $\prod_{k=1}^{j-1}\vert z_j-z_k\vert^{-m_k} \le A \exp(Bp(2^n)) 2^{-n(m_1+\cdots+m_{j-1})}$.
\end{itemize}
\end{lemma}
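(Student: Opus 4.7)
I would prove (i), (ii), (iii) in sequence; each is a short computation from the definition of $N(z,r)$ and exactly one of the two hypotheses. Throughout I may assume $|z_j|\ge 1$, since only finitely many $z_j$ lie in $D(0,1)$ and these can be absorbed into the constants; I also use that any radial nonnegative subharmonic function is nondecreasing in $|z|$ (its value at $0$ equals its infimum, and a convex function on $\R$ attaining its infimum at $-\infty$ is nondecreasing), so $p(z_j)\le p(2^n)$ whenever $|z_j|\le 2^n$.

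For (i), I pull the first $j$ terms out of $N(0,2^n)$: since $|z_k|\le|z_j|$ for $k\le j$,
\[
N(0,2^n) \;=\; \sum_{0<|z_k|\le 2^n} m_k\log\frac{2^n}{|z_k|}
\;\ge\; \sum_{k=1}^j m_k\log\frac{2^n}{|z_k|}
\;\ge\; (m_1+\cdots+m_j)\log\frac{2^n}{|z_j|}.
\]
Applying (\ref{N(0,R)}) and exponentiating gives the second inequality of (i); the first inequality follows from $m_j\le m_1+\cdots+m_j$ and $2^n/|z_j|\ge 1$. For (ii), every term of $\sum_{0<|z_j-z_k|\le|z_j|}m_k\log(|z_j|/|z_j-z_k|)$ is nonnegative, so isolating the diagonal term in $N(z_j,|z_j|)$ yields $m_j\log|z_j|\le N(z_j,|z_j|)\le Ap(z_j)+B$ by (\ref{N(z,z)}).

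Item (iii) is the main point and combines both hypotheses through the decomposition
\[
\log\frac{2^n}{|z_j-z_k|}\;=\;\log\frac{2^n}{|z_j|}\;+\;\log\frac{|z_j|}{|z_j-z_k|},
\]
summed against $m_k$ for $k<j$. The first piece yields $(m_1+\cdots+m_{j-1})\log(2^n/|z_j|)$, controlled by (i). In the second piece, only the indices with $|z_j-z_k|\le|z_j|$ contribute positively, so enlarging the sum to all such $k\ne j$ gives
\[
\sum_{k<j}m_k\log\frac{|z_j|}{|z_j-z_k|}\;\le\!\!\sum_{\substack{k\ne j\\ 0<|z_j-z_k|\le|z_j|}}\!\!m_k\log\frac{|z_j|}{|z_j-z_k|}\;\le\;N(z_j,|z_j|)\;\le\;Ap(z_j)+B,
\]
where the last step uses (\ref{N(z,z)}) together with $p(z_j)\le p(2^n)$. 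Adding the two bounds and exponentiating gives (iii).

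The only moderately delicate point is in (iii): one must split along the $|z_j|$-scale rather than at $2^n$, since a naive decomposition against $2^n$ would pair a sum of $m_k$'s (controlled by $p(2^n)$) with a factor of $\log(2^n/|z_j|)$ of size $n$, producing a spurious $p(2^n)^2$. The chosen splitting cleanly assigns the $\log(2^n/|z_j|)$ factor to item (i) and the local part $\log(|z_j|/|z_j-z_k|)$ to (\ref{N(z,z)}), so both terms remain linear in $p(2^n)$.
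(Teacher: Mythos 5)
Your proof is correct and follows essentially the same route as the paper: (i) by isolating the first $j$ terms in $N(0,2^n)$ and invoking (\ref{N(0,R)}), (ii) by isolating the diagonal term $m_j\ln|z_j|$ in $N(z_j,|z_j|)$ and invoking (\ref{N(z,z)}), and (iii) by combining the bound $\sum_{k<j}m_k\ln(|z_j|/|z_j-z_k|)\le N(z_j,|z_j|)$ with the rescaling from $|z_j|$ to $2^n$ supplied by (i). Your explicit logarithmic splitting $\ln(2^n/|z_j-z_k|)=\ln(2^n/|z_j|)+\ln(|z_j|/|z_j-z_k|)$ is exactly the multiplicative decomposition the paper performs across its two displayed inequalities, and your remark about dropping the negative terms $k<j$ with $|z_j-z_k|>|z_j|$ while admitting extra nonnegative terms is a welcome (and correct) clarification of a step the paper leaves implicit.
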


\begin{proof}
(i) For $0<\vert z_j\vert \le 2^n$, we have 
$$N(0,2^n)\ge \sum_{0<\vert z_k\vert\le 2^n}m_k\ln\frac{2^n}{ \vert z_k\vert}\ge m_j\ln\frac{2^n}{\vert z_j\vert}.$$
We readily obtain the result by condition (\ref{N(0,R)}). 

The second inequality is obtained in the same way, noting that 
$$N(0,2^n)\ge \sum_{k=1}^jm_k\ln\frac{2^n}{\vert z_k\vert}\ge(\ln \frac{2^n}{\vert z_j\vert}) \sum_{k=1}^j m_k .$$

(ii) It is a simple consequence of condition (\ref{N(z,z)}) :
$$m_j \ln \vert z_j\vert \le N(z_j,\vert z_j\vert)\le A p(z_j)+B.$$

(iii) It is also a consequence of condition (\ref{N(z,z)}) :

$$\sum_{k=1}^{j-1} m_k\ln\frac{ \vert z_j\vert }{\vert z_j-z_k\vert} \le \sum_{0<\vert z_k-z_j\vert\le \vert z_j\vert} m_k\ln\frac{\vert z_j\vert }{\vert z_j-z_k\vert}=N(z_j,\vert z_j\vert)\le A p(z_j)+B.$$ 
We deduce that 

\[
\begin{split}
\prod_{k=1}^{j-1}\vert z_j-z_k\vert^{-m_k} & \le A\exp(Bp(z_j))\vert z_j\vert^{-(m_1+\cdots+m_{j-1})}\\
 & \le A\,2^{-n(m_1+\cdots m_{j-1})}\exp(Bp(2^n))
 \end{split}
 \]
 using (i).
\end{proof}

Let $W=\{w_{j,l}\}_{j,0\le l\le m_j-1}$ be in $A_p(V)$. In order to show that $W$ verifies (\ref{Im}), we are going to use Lemma \ref{induction} and show by induction on $q\ge 1$ the following property :

For all $n\in \N$ such that $\vert z_q\vert  \le 2^n$ and for all $0\le l\le m_q-1$, 
$$\vert \phi_{q,l}\vert 2^{n(l+m_1+...+m_{q-1})}\le A \exp(B p(2^n)),$$
where $A$ and $B$ are positive constants only depending on $V$ and $W$.

$q=1$ : for $\vert z_1\vert \le 2^n$ and  $0\le l\le m_1-1$, we have 
\[
\vert \phi_{1,l}\vert= \vert w_{1,l}\vert \le A \exp(Bp(z_1))\le A \exp(Bp(z_1)) 2^{-nl} 2^{nm_1}\\
 \le  A\exp(Bp(2^n)) 2^{-nl}
\]
using Lemma \ref{useful}, (i) and (ii).

Suppose the property  true for $1\le j\le q-1$. Let $n\in \N$ be such that $\vert z_q\vert \le 2^n$.

Again, we proceed by induction on $l$, $0\le l\le m_q-1$.

$l=0$ : by Lemmas \ref{induction} and \ref{number}, we have 
\[
\vert P_{q-1}(z_q)\vert \le A \exp(Bp(2^n))\sum_{j=1}^{q-1} 2^{2(m_1+\cdots+m_j)}\le (q-1) 2^{2(m_1+\cdots+m_{q-1})}\le A  \exp(Bp(2^n)).
\]
By Lemma \ref{useful} (iii),
\[
\vert \Pi_{q-1}(z_q)\vert^{-1}=\prod_{k=1}^{q-1}\vert z_q-z_k\vert^{-m_k} \le A \exp(Bp(2^n)) 2^{-n(m_1+\cdots+m_{q-1})}
\]
We deduce that 
\[ \vert \phi_{q,0}\vert \le A \exp(Bp(2^n)) 2^{-n(m_1+\cdots+m_{q-1})}.\]

Suppose the estimate true for $0\le j\le l-1$, using both inequalities of Lemma \ref{induction} and Lemma \ref{number}, we have
$$ \sum_{j=0}^{l-1}\vert \frac{\Pi_{q-1}^{(l-j)}(z_q)}{(l-j)!}\phi_{q,j}\vert \le A\exp(Bp(2^n))$$
and 
$$\vert \frac{P_{q-1}^{(l)}(z_q)}{l!}\vert \le A\exp(Bp(2^n)). $$
As for $l=0$, we use Lemma \ref{useful}  (iii) to complete the proof.
\end{proof}

\begin{proof}[Proof of Theorem \ref {main}]\par

We already showed the necessity in Theorem \ref{nec}. Let us prove the sufficiency :

We assume condition (\ref{N(0,R)}). Let $W=\{w_{j,l}\}_{j,0\le l\le m_j-1}$ be an element of $\tilde A_p(V)$.

Let $\mathcal X$ be a smooth cut-off function such that $\mathcal X(x)=1$ if $|x|\le1$ and $\mathcal X(x)=0$ if $|x|\ge4$.

Set  $\mathcal X_n(z)=\mathcal X(\frac{|z|^2}{2^{2n}})$, for $n \in \N$, $\rho_0=\mathcal X_0$ and $\rho_{n+1}=\mathcal X_{n+1}-\mathcal X_n$.
It is clear that the family $\{\rho_n\}_n$ form a partition of the unity, that the support of $\mathcal X_n$ is contained in the disk $|z|\leq2^{n+1}$ and that the support of 
$\rho_n$ is contained in the annulus $\{2^{n-1}\le|z|\leq 2^{n+1}\}$ for $n\ge1$. 

We will denote by $q_n$  the number of distinct points $z_j$ in $D(0,2^n)$, that is : $q_n=\sum_{\vert z_j\vert \le 2^n} 1$.

\begin{lemma}\label {F}
There exists a ${\mathcal C}^{\infty}$ function $F$ on $\C$ such that, for certain constants $A,B>0$,
\begin{itemize}
\item[(i)] $\displaystyle \frac {F^{(l)}(z_j)}{ l!}=w_{j,l}$\  for all $j\in \N$, $0\le l\le m_j-1$.
\item [(ii)] for all $z\in \C$, $\vert F(z)\vert \le Ae^{Bp(z)}$, 
\item [(iii)] $\bar\partial F=0$ on $D(0,1)$ and for  any $n\ge 2$ and $2^{n-2}\le \vert z\vert < 2^{n-1}$,
$$\vert \bar\partial F(z)\vert \le A\  2^{-n(m_1+\cdots+m_{q_n})}\prod_{k=1}^{q_n}\vert z-z_k\vert^{m_k} e^{Bp(2^n)}.$$
\end{itemize}
\end{lemma}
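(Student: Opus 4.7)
The plan is to glue Newton interpolating polynomials with the partition of unity $\{\rho_n\}$. I would set
\[
F(z)=\sum_{n\ge 0}\rho_n(z)\,P_{q_{n+3}}(z),
\]
so that on the support of $\rho_n$ one uses the Newton polynomial interpolating $W$ at every point of $V$ lying in $D(0,2^{n+3})$. The sum is locally finite and $F\in\mathcal C^\infty(\C)$. The shift by $3$ is not cosmetic; it is the minimal one that makes the estimate (iii) below go through.

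For (i), whenever $\rho_n\not\equiv 0$ near $z_j$ one has $z_j\in D(0,2^{n+1})\subset D(0,2^{n+3})$, so $P_{q_{n+3}}$ matches $W$ at $z_j$ up to order $m_j-1$ by construction. Since $\sum_n\rho_n\equiv 1$ everywhere, all its $z$-derivatives vanish identically, and Leibniz' rule then yields $F^{(l)}(z_j)/l!=w_{j,l}$ for $0\le l<m_j$. Property (ii) follows from Lemma \ref{induction} applied to $P_{q_{n+3}}$ on $\mathrm{supp}\,\rho_n\subset\{|z|\le 2^{n+1}\}$, combined with Lemma \ref{number} (to bound $q_{n+3}$ and $m_1+\cdots+m_{q_{n+3}}$ by a multiple of $p(2^{n+3})$) and the doubling property of $p$ to absorb the shift between $2^n$ and $2^{n+3}$.

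The core of the lemma is (iii). On $D(0,1)$ we have $\rho_0\equiv 1$ and $\rho_n\equiv 0$ for $n\ge 1$, so $F=P_{q_3}$ is a polynomial and $\bar\partial F=0$. For $n\ge 2$ and $z$ in the annulus $A_n:=\{2^{n-2}\le|z|<2^{n-1}\}$, I would use $\sum_m\bar\partial\rho_m\equiv 0$ to rewrite
\[
\bar\partial F(z)=\sum_m \bar\partial\rho_m(z)\bigl(P_{q_{m+3}}(z)-P_{q_n}(z)\bigr).
\]
The supports of the $\bar\partial\rho_m$ force $m\in\{n-3,\dots,n\}$, so in every surviving term $m+3\ge n$; the Newton-form expansion
\[
P_{q_{m+3}}(z)-P_{q_n}(z)=\sum_{j=q_n+1}^{q_{m+3}}\sum_{l=0}^{m_j-1}\phi_{j,l}(z-z_j)^l\Pi_{j-1}(z)
\]
then contains $\Pi_{q_n}(z)=\prod_{k=1}^{q_n}(z-z_k)^{m_k}$ as a common factor. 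Pulling it out gives the desired geometric weight.

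The main obstacle is then to verify that the remaining sum is bounded by $A\,2^{-n(m_1+\cdots+m_{q_n})}e^{Bp(2^n)}$. For this I would use the $\tilde A_p(V)$-estimate $|\phi_{j,l}|\le A\,2^{-(m+3)(l+m_1+\cdots+m_{j-1})}e^{Bp(2^{m+3})}$, the trivial bounds $|z-z_j|,|z-z_k|\lesssim 2^n$, and the doubling of $p$ to move between the scales $2^n$ and $2^{m+3}$. The delicate point is the bookkeeping of exponents: the choice $K=3$ is precisely what makes the positive powers of $2$ from the spatial factors $(z-z_j)^l\prod(z-z_k)^{m_k}$ be dominated by the negative powers from the $\phi_{j,l}$, leaving exactly $2^{-n(m_1+\cdots+m_{q_n})}$ after $\Pi_{q_n}$ is extracted. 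The polynomial number of terms in the Newton expansion is absorbed via Lemma \ref{number}, and $|\bar\partial\rho_m(z)|\lesssim 2^{-n}$ on $A_n$ only strengthens the bound.
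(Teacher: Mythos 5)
Your construction is essentially the same as the paper's: glue Newton polynomials $P_{q_\cdot}$ with the partition of unity $\{\rho_n\}$, extract the common factor $\Pi_{q_n}$ from the difference of consecutive Newton polynomials, and control the remainder by the $\tilde A_p(V)$ condition, Lemma~\ref{induction}, and Lemma~\ref{number}. The paper's choice is $F=\sum_{n\ge 2}\rho_{n-2}P_{q_n}$ (i.e.\ $\rho_m P_{q_{m+2}}$), a shift of $2$, and it observes that on the annulus $2^{n-2}\le|z|<2^{n-1}$ only $\bar\partial\mathcal X_{n-2}$ survives, giving directly $\bar\partial F=-\bar\partial\mathcal X_{n-2}\,(P_{q_{n+1}}-P_{q_n})$, which is exactly divisible by $\Pi_{q_n}$. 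Your rewriting $\bar\partial F=\sum_m\bar\partial\rho_m(P_{q_{m+3}}-P_{q_n})$ is a legitimate variant of the same step.

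One side remark in your write-up is incorrect. On the annulus $\{2^{n-2}\le|z|<2^{n-1}\}$ the supports of $\bar\partial\rho_m$ (contained in $\{2^{m-1}<|z|<2^{m+1}\}$) only allow $m\in\{n-2,n-1\}$, not $m\in\{n-3,\dots,n\}$. Consequently a shift of $K=2$ already guarantees $m+K\ge n$ for every surviving index, and $\Pi_{q_n}$ factors out; the paper's choice $K=2$ works. Your claim that $K=3$ is the \emph{minimal} shift that makes (iii) go through is therefore false; $K=3$ is merely sufficient and only needed because of the slightly too generous support range. This does not damage your argument (larger $K$ still works, the extra factors being absorbed via Lemma~\ref{number} and doubling of $p$, as you note), but the remark should be removed or corrected.
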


\begin{proof}
We set 
$$F(z)=\sum_{n\ge 2} \rho_{n-2}(z)P_{q_n}(z).$$
where 
$$P_q(z)=\sum_{j=1}^q\left(\sum_{l=0}^{m_j-1} \phi_{j,l}(z-z_j)^l\right)\prod_{k=1}^{j-1}(z-z_k)^{m_k}.$$
It is the Newton polynomial we mentioned in Remark \ref{Newton}.

(i) : For all $j\ge 1$ and $0\le l\le m_j-1$, if  $z_j$ is in the support of $\rho_{n-2}$, then $P_{q_n}^{(l)}(z_j)=l!w_{j,l}$. Thus
\[
\begin{split}
F^{(l)}(z_j) & =\sum_{n\ge 2}\left(\sum_{k=0}^lC_l^k\rho_{n-2}^{(l-k)}(z_j)k!w_{j,k}\right)\\
& = \sum_{k=0}^lC_l^k k!w_{j,k}(\sum_n\rho_n)^{(l-k)}(z_k)=l!\,w_{j,l}.
\end{split}
\]

(ii) : For $z\ge 1$, let $n\ge 2$ be the integer such that $2^{n-2}\le \vert z\vert < 2^{n-1}$. Then, we have  : 
$$F(z)=\rho_{n-2}(z)P_{q_n}(z)+\rho_{n-1}(z)P_{q_{n+1}}(z).$$

For all $0\le j\le q_n$, we have $\vert z_j\vert \le 2^n$ and $|z-z_j|\le2^{n+1}$.
Using Lemmas \ref {induction}, condition (\ref{N(0,R)})  and property (c) of the weight, we have
$$\vert P_{q_n}(z)\vert \lesssim  \exp(Bp(2^n))\le A \exp(B p(2^n))\le A \exp(Bp(z)).$$

The same estimation holds for $P_{q_{n+1}}$ thus,
$$\vert F(z)\vert \lesssim \exp(Bp(z)).$$

(iii) Now, we want to estimate $\bar\partial F$. 

It is clear that $F(z)=P_{q_2}(z)$  on $D(0,1)$.  

Let $\vert z\vert \ge 1$ and $n$ the integer such that $2^{n-2}\le |z| <2^{n-1}$. We have

$$\bar\partial F(z)=\bar\partial \rho_{n-2}(z) P_{q_n}(z)+\bar\partial \rho_{n-1}(z)P_{q_{n+1}}(z).$$

Since $z$ is outside the supports of  $\bar \partial  \mathcal X_{n-3}$ and of $\bar \partial  \mathcal X_{n-1}$, we have
$$\bar\partial F(z)=-\bar \partial \mathcal X_{n-2}(z) (P_{q_{n+1}}(z)-P_{q_n}(z))=\prod_{k=1}^{q_n}(z-z_k)^{m_k} G_n(z)$$
where 
$$G_n(z)=-\bar \partial \mathcal X_{n-2}(z)\sum_{j=q_n+1}^{q_{n+1}}\prod_{k=q_n+1}^{j-1}(z-z_k)^{m_k}\left(\sum_{l=0}^{m_j-1} \phi_{j,l}(z-z_j)^l\right).$$
For $k\le q_{n+1}$, $\vert z-z_k\vert \le 2^{n+2}$, thus, using the estimate given by (\ref{Im}) then Lemma \ref{number}, we show that

\begin{equation*}
\begin{split}
\vert G_n(z)\vert & A \exp(Bp(2^n)) 2^{-n(m_1+\cdots+q_n)} \sum_{j=q_n+1}^{q_{n+1}} 2^{m_{q_n+1}+...+m_j}\\
& \lesssim \exp(Bp(2^n))2^{-n(m_1+\cdots+m_{q_n})}.
\end{split}
\end{equation*}
We readily obtain the desired estimate.
\end{proof}

Now, when looking for a holomorphic interpolating function of the form
$f=F+u$, we are led to the $\bar\partial$-problem
\[
\bar\partial u=-\bar\partial F\ , 
\]
which we solve using H\"ormander's theorem \cite[Theorem 4.2.1]{Ho1}.

The interpolation problem is then reduced to the following lemma.
\begin{lemma}\label{U}

There exists a subharmonic function $U$ such that, for certain constants $A,B>0$,
\begin{itemize}
\item[(i)] $U(z)\simeq m_j\log |z-z_j|^2$ near $z_j$,
\item[(ii)] $U(z)\le A p(z) +B$ for all  $z\in \C $.
\item[(iii)] $\vert \bar\partial F(z)\vert^2 e^{-U(z)}\le A e^{B( p(z)}$ for all $z\in\C$.
\end{itemize}
\end{lemma}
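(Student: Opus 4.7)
The plan is to set $U(z):=2\log|G(z)|$ for an entire function $G\in A_p(\C)$ that vanishes on $V$ with the prescribed multiplicities. The hypothesis (\ref{N(0,R)}), combined with Lemma \ref{number}, gives $n(0,R)\lesssim p(R)+1$, which is precisely what is needed to produce such a $G$ via a canonical Weierstrass product
\[
G(z)=z^{m_0}\prod_{z_j\ne 0}\bigl[E_{k_j}(z/z_j)\bigr]^{m_j},\qquad E_k(w)=(1-w)\exp\bigl(w+w^2/2+\cdots+w^k/k\bigr),
\]
where $m_0\ge 0$ accounts for a possible zero at the origin and the genera $k_j$ are tuned to the doubling weight $p$. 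Properties (i) and (ii) then hold immediately: (i) from the zero-order matching of $G$ at each $z_j$, and (ii) from the classical Weierstrass upper bound $\log|G(z)|\le Ap(z)+B$.

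For (iii), I would fix $z$ in the annulus $A_n:=\{2^{n-2}\le|z|<2^{n-1}\}$ and factor $G=\Pi_{q_{n+1}}\cdot H_n$, where $\Pi_{q_{n+1}}(z)=\prod_{|z_k|\le 2^{n+1}}(z-z_k)^{m_k}$ collects the zeros near $z$ and $H_n$ is the remaining regularized product. Lemma \ref{F}(iii) gives $|\bar\partial F(z)|^2\lesssim 2^{-2nM_n}|\Pi_{q_n}(z)|^2 e^{2Bp(2^n)}$, where $M_n=m_1+\cdots+m_{q_n}$, so after cancellation of the common factors
\[
\frac{|\bar\partial F(z)|^2}{|G(z)|^2}\;\lesssim\;\frac{2^{-2nM_n}\,e^{2Bp(2^n)}}{\prod_{q_n<k\le q_{n+1}}|z-z_k|^{2m_k}\cdot|H_n(z)|^2}.
\]
The crude bound $|z-z_k|\le 2^{n+2}$ on the finitely many factors in the denominator, combined with $n(0,2^{n+1})-n(0,2^n)\lesssim p(2^n)$ from Lemma \ref{number} and the doubling of $p$, absorbs all the powers of $2$ into $e^{Cp(z)}$, leaving only $|H_n(z)|^{-2}$ to be controlled.

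The main obstacle is therefore the lower bound $|H_n(z)|\gtrsim e^{-Cp(z)}$ on $A_n$. This is a classical minimum-modulus estimate for canonical products, made tractable here because $z\in A_n$ is well separated from the zeros appearing in $H_n$ (which satisfy $|z_k|>2^{n+1}\ge 4|z|$): each Weierstrass factor admits the two-sided bound $\exp(-c(|z|/|z_j|)^{k_j+1})\le|E_{k_j}(z/z_j)|\le\exp(C(|z|/|z_j|)^{k_j+1})$. Choosing the genera $k_j$ so that $\sum_{|z_k|>2|z|}m_k(|z|/|z_k|)^{k_j+1}\lesssim p(z)$, which is possible thanks to $n(0,R)\lesssim p(R)+1$ and the doubling of $p$, yields the required lower bound and completes the verification of (iii).
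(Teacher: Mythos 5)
Your plan—take $U=2\log|G|$ for an entire $G\in A_p(\C)$ with $V\subset G^{-1}(0)$—is the natural first thing to try, and it is genuinely different from what the paper does. But as written it has a gap that is essentially the reason the paper does \emph{not} take this route. The existence of such a $G$ from condition (\ref{N(0,R)}) alone is one of the hard outputs of the whole argument: in the paper it is Theorem~\ref{unicity}, (iii)$\Rightarrow$(i), proved as a \emph{consequence} of Theorem~\ref{main}, hence of Lemma~\ref{U}. You are not quoting that theorem (which would be circular), you are claiming a direct Weierstrass construction; but for a general doubling radial weight $p$ (not just $p(z)=|z|^\alpha$) the ``classical Weierstrass upper bound $\log|G|\le Ap+B$'' is itself a nontrivial variable-genus theorem, and you do not say how the genera $k_j$ are to be chosen, nor verify that the resulting product lands in $A_p(\C)$.

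The lower bound is where the sketch actually breaks. When you factor $G=\Pi_{q_{n+1}}\cdot H_n$ with $\Pi_{q_{n+1}}(z)=\prod_{|z_k|\le 2^{n+1}}(z-z_k)^{m_k}$, the factor $H_n$ is \emph{not} just the product over the far zeros $|z_j|>2^{n+1}$: it also carries all the exponential convergence factors attached to the near zeros, namely $\prod_{0<|z_j|\le 2^{n+1}}(-z_j)^{-m_j}\exp\bigl(m_j\sum_{s=1}^{k_j}(z/z_j)^s/s\bigr)$. For $z\in A_n$ and $|z_j|\ll|z|$ with $k_j\ge 1$, the exponent has real part of size $|z|/|z_j|$, which can be enormous compared to $p(z)$; so the two-sided bound on $|H_n|$ you invoke, which treats only the factors $E_{k_j}(z/z_j)$ with $|z_j|>4|z|$, does not control $H_n$ at all. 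This is exactly the cancellation phenomenon that makes lower bounds for canonical products delicate (Cartan-type estimates hold only off exceptional sets), and the $\bar\partial$ machinery needs a genuine pointwise lower bound on the annulus, not one off an exceptional set.

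The paper avoids all of this by never asking for an entire $G$. It builds $V(z)=\sum_n \rho_{n-2}(z)V_n(z)$ from local logarithmic potentials $V_n(z)=\sum_{0<|z_j|\le 2^n}m_j\log\frac{|z-z_j|^2}{|z_j|^2}$: because each $V_n$ is already normalized by $|z_j|$ and the cutoffs localize to an annulus, the upper bound (ii) and the estimate (iii) against $\bar\partial F$ are immediate from $N(0,2^n)\lesssim p(2^n)$, with no minimum-modulus theorem needed. The price is that $V$ is not subharmonic; the paper pays it by computing $\Delta V\gtrsim -\bigl(n(0,2^3|z|)-n(0,2|z|)\bigr)/|z|^2$ and adding a radial correcting term $W$, built from the counting function, with $\Delta W$ dominating that deficit and $W\lesssim p$. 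This trades the difficult Weierstrass lower bound for an elementary Laplacian computation, and is where the real content of the lemma lies. If you want to repair your approach, you would need to supply the variable-genus growth theorem and, more seriously, a pointwise minimum-modulus estimate valid on the whole annulus $A_n$; neither is routine for a general doubling weight.
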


Admitting this lemma for a moment, we proceed with the proof of the theorem. 

 From H\"ormander theorem \cite[Theorem 4.4.2]{Ho1}, we can find a ${\mathcal{C}}^\infty$ function $u$ such that 
$\bar\partial u=-\bar\partial F$ and, denoting by $d\lambda$ the Lebesgue measure,
$$\int_{\C}\frac{\vert u(w) \vert^2e^{-U(w)-Ap(w)}}{(1+\vert w\vert^2)^2}\,d\lambda(w)\le\int_{\C}\vert \bar\partial F \vert^2e^{-U(w)-Ap(w)}\,d\lambda(w).$$
By the property (a) of the weight $p$, there exists $C>0$ such that
$$\int_{\C}e^{-Cp(w)}d\lambda(w)<\infty.$$
Thus, using (ii) of the lemma, and the estimate on $|\bar\partial F(z)|^2$, we see that  the last integral is convergent if  $A$ is large enough.
By condition (iii), near $z_j$, $e^{-U(w)}(w-z_j)^l$ is not summable for $0\le l\le m_j-1$, so we have necessarily $u^{(l)} (z_j)=0$ for all $j$ and $0\le l \le m_j-1$ and consequently, 
$\displaystyle \frac{f^{(l)}(z_j)}{l!}=w_j^l$.

Now, we have to verify that $f$ has the desired growth. 

By the mean value inequality,
$$
\vert f(z)\vert\lesssim \int_{D(z,1)}\vert f(w)\vert\,d\lambda(w)\lesssim
\int_{D(z,1)}\vert F(w)\vert\,d\lambda(w)
+\int_{D(z,1)}\vert u(w)\vert\,d\lambda(w).
$$
Let us estimate the two integrals that we denote by $I_1$ and $I_2$.

\noindent For $w\in D(z,1)$, 
$$\vert F(w)\vert\lesssim e^{Bp( w)}\lesssim e^{Cp( z)}.$$
Then, 
$$I_1\lesssim e^{C
p(z)}$$
\smallskip
\noindent To estimate $I_2$, we use Cauchy-Schwarz inequality,
$$I_2^2\le J_1\,J_2$$
where
$$
J_1=\int_{D(z,1)}\vert u(w)\vert^2e^{-U(w)-Bp( w)}\,d\lambda(w),\ J_2=\int_{D(z,1)}e^{U(w)+Bp(w)}\,d\lambda(w).$$
We have 
$$J_1 \lesssim \int_{\C} \vert u(w) \vert^2e^{-U(w)-Bp(w)}\,d\lambda(w) \lesssim \int_{\C}\frac{\vert u(w) \vert^2e^{-U(w)}}{(1+\vert w\vert^2)^2}\,d\lambda(w)< +\infty,$$
by property (a) of $p$, if $B>0$ is chosen big enough.

To estimate $J_2$, we use the condition (i) of the lemma and the property (b) of the weight $p$. For $w\in D(z,1)$,  
$$e^{U(w)+Bp(w)}\le e^{Cp( w)}\lesssim e^{Ap(z)}.$$
We easily deduce that $J_2\lesssim e^{Ap(z)}$ and, finally, that $f\in A_p(\C)$.
\end{proof}

\begin{proof}[Proof of Lemma \ref{U}]

For the sake of simplicity and up to a homotethy, we may assume that $\vert z_k\vert > 2$ for all $z_k\not=0$. Besides, in the definition of the following functions $V_n$, we will assume $z_1\not=0$, otherwise, we may add the term $m_1\ln \vert z\vert$ to each $V_n$.
We set 
$$V_n(z)=\sum_{ 0<|z_j|\le  2^n}m_j\log\frac{ |z-z_j|^2}{|z_j|^2},$$
then 
$$V(z)=\sum_{n\ge 2} \rho_{n-2}(z)V_n(z).$$

First, we will show that $V$ verifies (i), (ii) and (iii).  Then, we will estimate $\Delta V$ from below and add a correcting term $W$. The subharmonic function $U$ will be of the form $V+ W$.

(i) Let $\vert z_k\vert$ be such that $2^{m-1}< \vert z_k\vert<2^{m+1}$. For $2^{m-1}< \vert z\vert<2^{m+1}$,
$$V(z)=\rho_{m-1}(z)V_{m+1}(z)+\rho_m(z)V_{m+2}(z)+\rho_{m+1}(z)V_{m+3}(z).$$
As the $\rho_n$'s form a partition of the unity, it is clear that $V(z)- m_k\ln\vert z-z_k\vert^2$ is continuous in a neighborhood of $z_k$. 

Note that $V$ is smooth on $\{\vert z\vert \le 2\}$ since we have assumed that all $\vert z_j\vert > 2$.

(ii) Let $n\ge 2$ and $2^{n-2} \le |z|<2^{n-1}$.
then 
$$V(z)=\rho_{n-2}(z)V_n(z)+\rho_{n-1}(z)V_{n+1}(z).$$
For all $|z_j|<2^n$, we have  $|z-z_j|< 2^{n+1}$.
Thus, 
$$V_n(z)\le\sum_{ |z_j|\le  2^n}m_j\log \frac{2^{n+1}}{|z_j|}\le N(0,2^{n+1}).$$
 Finally, we obtain that 
 $$V(z)\le N(0,2^{n+1})+N(0,2^{n+2})\lesssim p(2^n) \lesssim p(z)$$
  by condition (\ref{N(0,R)}) and property (c) of the weight.

(iii)  We  have
$$-V(z)/2=\sum_{\vert z_j\vert\le 2^n} m_j \ln \frac{\vert z_j\vert}{\vert z-z_j\vert}+\rho_{n+1}(z)\sum_{2^n<\vert z_j\vert\le 2^{n+1}}m_j\ln\frac{\vert z_j\vert}{\vert z-z_j\vert}.$$
Note that for all $2^n<\vert z_j\vert\le 2^{n+1}$, we have $\vert z-z_j\vert > 2^n-2^{n-1}=2^{n-1}$. We obtain
\begin{equation}
\begin{split}
-V(z)/2 \le 
& \sum_{\vert z_j\vert\le 2^n}  m_j \ln \frac{2^n}{\vert z-z_j\vert}+\ln 4  \sum_{2^n<\vert z_j\vert\le 2^{n+1}}m_j\\
& \le \ln \left(2^{n(m_1+\cdots m_{q_n})} \prod_{j=1}^{q_n}  \vert z-z_j\vert^{-m_j}\right)+\ln (A \exp(Bp(2^n))
\end{split}
\end{equation}
for certain constants $A,B>0$ using Lemma \ref{number}.
Finally, combining this inequality with (iii) of Lemma \ref{F}, we obtain 
$$\vert \bar\partial F(z)\vert \exp(-V(z)/2)\lesssim \exp(Bp(2^n))\lesssim \exp(Bp(z)).$$

Now, in order to get a lower bound of  the laplacian, we compute $\Delta V(z)$ :
$$\Delta V=\sum_{n\ge 2}\rho_{n-2} \Delta V_n+2\Re \left(\sum_n\bar\partial \rho_{n-2} \partial V_n\right)+\sum_{n\ge 2} \partial\bar \partial\rho_{n-2} V_n.$$
The first sum is positive since every $V_k$ is subharmonic.

Let us estimate  the second and the third sums, that we will denote respectively by 
$B(z)$ and $C(z)$. 
For $n\ge 2$ and $2^{n-2} \le |z|<2^{n-1}$, since $z$ is outside the supports of  $\bar \partial  \mathcal X_{n-3}$ and of $\bar \partial  \mathcal X_{n-1}$, we have

\[
\begin{split}
B(z)= & 2\Re \left [\bar \partial \mathcal X_{n-2}(z)\partial \left(V_n(z)-V_{n+1}(z)\right)\right],\\
C(z)= & \partial \bar \partial \mathcal X_{n-2}(z) \left(V_n(z)-V_{n+1}(z) \right).
\end{split}
\]

 $$V_n(z)-V_{n+1}(z)=\sum_{ 2^n< |z_j|\le 2^{n+1}}m_j\log\frac{|z-z_j|^2}{|z_j|^2},$$
 $$\partial \left(V_n(z)-V_{n+1}(z)\right)=\sum_{ 2^n<|z_j|\le 2^{n+1}}m_j\frac{1}{z- z_j},$$
 and
 $$\vert \bar\partial \mathcal X_{n-2}(z)\vert \lesssim \frac {1}{2^n},\qquad |\partial \bar\partial \mathcal X_{n-2}(z)|\lesssim \frac{1}{2^{2n}}.$$
 For  $z$  in the support of $\bar \partial \mathcal X_{n-2}$, we have $|z|\le2^{n-1}$, and for $2^n\le|z_j|<2^{n+1}$, $2^{n-1}\le|z-z_j|\le2^{n+2}$. Thus, we obtain that 
$$| \partial \bar \partial \mathcal X_{n-2}(z) \left(V_{n+1}(z)-V_n(z)\right)| \lesssim \frac{n(0,2^{n+1})-n(0,2^n)}{2^{2n}},$$
and 
 $$|\bar \partial \mathcal X_{n-2}(z)\partial \left(V_{n+1}(z)-V_n(z)\right)| \lesssim \frac{n(0,2^{n+1})-n(0,2^n)}{2^{2n}}.$$
 Finally, 
 $$\Delta V(z)\gtrsim -\frac{n(0,2^{n+1})-n(0,2^n)}{ 2^{2n}}\gtrsim - \frac{n(0,2^3|z|)-n(0,2|z|)}{ |z|^2} .$$
To construct the correcting term, $W$, we begin by putting
$$ f(t)=\int_0^t n(0,s)ds,\  \ g(t)=\int_0^t \frac{f(s)}{s^2} ds \ \hbox{and  }\ \ W(z)=g(2^3 |z|).$$
The following inequalities are easy to see :
$$f(t)\le t n(0,t),\ \ g(t)\le \int_0^t \frac{n(0,s)}{s} ds=N(0,s).$$
Thus,  by condition (\ref {N(0,R)}) and property (c), 
$$W(z)\le N(0,2^3|z|)\lesssim p(2^3 z)\lesssim p(z)$$
Finally,  to estimate the laplacian of $W$, we will denote $t=2^3 |z|$.
$$\Delta W(z)=\frac{1}{t} g'(t)+g''(t)=\frac{1}{t^2} (f'(t)-\frac{f(t)}{t}).$$
$$f(t)=\int_0^t n(0,s)ds = \int_0^\frac{t}{4} n(0,s)ds+\int_\frac{t}{4}^t n(0,s)ds\le\frac{t}{4}n(0,\frac{t}{4})+t(1- \frac{1}{ 4}) n(0,t).$$
Thus, $$f'(t)-\frac{f(t)}{t}=n(0,t)-\frac{f(t)}{t} \ge\frac{1}{4}(n(0,t)-n(0,\frac{t}{4}))$$
and $$\Delta W(z)\gtrsim \frac{n(0,2^3|z|)-n(0,2|z|)}{|z|^2}.$$
Now, the desired function will be of the form 
$$U(z)=V(z)+\alpha W(z),$$
where $\alpha$ is a positive constants chosen big enough.
\end{proof}

\bigskip

\end{document}